\newtheorem{theorem}{Theorem}[section]
\newtheorem{lemma}[theorem]{Lemma}
\newtheorem{proposition}[theorem]{Proposition}
\newtheorem{remark}[theorem]{Remark}
\numberwithin{equation}{section}
\begin{document}
\title{{\bf\Large Multiple solutions to
a class of $p$-Laplacian Schr\"{o}dinger equations}}
\author{\\
{ \textbf{\normalsize Lin Zhang}}\\
{\it\small College of Mathematics and Statistics,}\\
{\it\small Xinyang Normal University,}\\
{\it\small Henan 464000, P.R.China}\\
{\it\small e-mail address:
linzhang@xynu.edu.cn}
}
\date{}
\maketitle
{\bf\normalsize Abstract.} {\small
In this paper, we will prove the existence of infinitely many solutions to the following equation by utilizing the variational perturbation method
\begin{equation*}
-div(A(x,u)|\nabla u|^{p-2}\nabla u)+\frac{1}{p}A_{t}(x,u)|\nabla u|^{p}+V(x)|u|^{p-2}u=g(x, u), ~~ x\in\mathbb{R}^{N},
\end{equation*}
where $2<p<N$, $V(x)$ represents a coercive potential. It should be emphasized that the multiplicity of solutions is unsuitable to be discussed under the framework of $W^{1,p}(\mathbb{R}^{N})\cap L^{\infty}(\mathbb{R}^{N})$, as the functional corresponding to the above equation does not satisfy the Palais-Smale condition in this space. To this end, we will use the variational perturbation method to construct novel perturbation space and perturbation functional. By relying on the established conclusions regarding the multiplicity of solutions for classical $p$-Laplacian equations, we shall analyze the equation in question. Our result addresses positively the open question from Candela et. al in \cite{CSS22CVPDE}.}

\medskip
{\bf\normalsize 2010 MSC:} {\small 35A01; 35A15; 35J10; 35J62; 35J92; 35Q55}

\medskip
{\bf\normalsize Key words:} {\small the variational perturbation method; quasi-linear elliptic equations; multiplicity; symmetric mountain pass theorem.}

\let\thefootnote\relax\footnotetext{{ \footnotesize This work was supported by the Young Project of Henan Natural Science Foundation(Grant No.252300420928)}}

\pagestyle{fancy}
\fancyhead{} 
\fancyfoot{} 
\renewcommand{\headrulewidth}{0pt}
\renewcommand{\footrulewidth}{0pt}
\fancyhead[CE]{ \textsc{Lin Zhang}}
\fancyhead[CO]{ \textsc{Multiple solutions to
a class of $p$-Laplacian Schr\"{o}dinger equations}}
\fancyfoot[C]{\thepage}


\newpage
\section{Introduction}

~~~~In this paper, our main focus lies on investigation of the existence of multiple solutions for the following  non-autonomous $p$-Laplacian type quasi-linear equations in the whole space $\mathbb{R}^{N}$
\begin{equation}\label{PL}
-div(A(x,u)|\nabla u|^{p-2}\nabla u)+\frac{1}{p}A_{t}(x,u)|\nabla u|^{p}+V(x)|u|^{p-2}u=g(x, u).
\end{equation}

The notable feature here is that $A$ depends on $u$. In case $p=2$, $A(u)=1+u^{2}$ and $g(x,u)=|u|^{q-2}u$, this is referred as Modified Nonlinear Schr\"{o}dinger Equation
\begin{equation}\label{2L}
\left\{
\begin{aligned}
&  \Delta u-V(x)u+\frac{1}{2}u\Delta(u^{2})+|u|^{q-2}u=0, ~~in ~~\Omega\\
& u=0, ~~on~~\Omega.
\end{aligned}
\right.
\end{equation}

In recent years, there are three important methods in studying the existence and multiplicity of solutions to the quasi-linear Schr\"{o}dinger equations: the constrained minimization method, the dual approach, and the variational perturbation method. The above three methods can be used to overcome the inevitable "common problem" when solving this class of quasi-linear Schr\"{o}dinger equations. The "common problem" is that the variational functional corresponding to the quasi-linear equations is non-differentiable. In fact, when overcoming this issue, all three methods mentioned above have their own advantages and limitations. For the quasi-linear functionals, the constrained minimization method can "avoid" its non-differentiability and analyze it directly. Unlike it, the dual approach and the variational perturbation method aim to recover the smoothness of the quasi-linear functionals. It should be pointed out that the change of variable method was first proposed in \cite{LWW03JDE}. Afterwards, in \cite{CJ04NA}, Colin and Jeanjean called it the dual approach. This method can be used to transform the quasi-linear equation to a semi-linear one. However, it is not applicable to the functionals with the general quasi-linear form $a_{ij}(x,u)\partial_{i}u\partial_{j}u$. For this motivation, another powerful approach was initiated in a series of papers \cite{LLW13PAMS,LLW13JDE,LW14JDE}, so called the variational perturbation method. This method is applicable to general quasi-linear equations with the above form and is effective for establishing multiple solutions in particular infinitely many solutions in the setting of the symmetric mountain pass theorem.

For the non-autonomous $p$-Laplacian type quasi-linear equation (\ref{PL}), if $A_{t}(x,u)$ is not identical to zero, in \cite{CS20NA}, Candela and Salvatore proved the existence of radial bounded solutions for (\ref{PL}) when $V(x)=1$. In \cite{CSS22CVPDE}, Candela, Salvatore and Sportelli proved the existence of bounded solutions for equation (\ref{PL}) in the whole space $\mathbb{R}^{N}$. In their proof, a bounded domain approximation was used. First, with the help of the Mountain Pass Theorem, the existence of bounded solutions to the objective equation in bounded domains is proved. Then, by extending the solutions obtained in the bounded domains, the existence of bounded solutions in the whole space $\mathbb{R}^{N}$ was proved. It should be pointed out that the method they used is only applicable to proving the existence of solutions. Moreover, Candela et al also raised a question in \cite{CSS22CVPDE}: can suitable perturbations be identified to apply the variational perturbation method and establish the multiplicity of solutions for such non-autonomous $p$-Laplacian equations (\ref{PL}) ?

The motivation of the current paper is to address this open question. For this purpose, we will use the variational perturbation method to prove the existence of infinitely many solutions to equation (\ref{PL}) when $2<p<N$ and $V(x)$ is coercive potential. It should be pointed out that when $p=2$, the potential function $V(x)=0$, $A(x,u)$ is constant and $g(x,u)=|u|^{q-2}u$ in equation (\ref{PL}), it can be transformed into a classical 2-laplacian quasi-linear elliptic equation.

Now, we define the weak solution of equation (\ref{PL}): $u\in W_{V}^{1,p}(\mathbb{R}^{N})\cap L^{\infty}(\mathbb{R}^{N})$ is said to be the weak solution of equation (\ref{PL}) if and only if $\langle dI(u),\phi\rangle=0$ holds for any $\phi\in C^{\infty}_{0}(\mathbb{R}^{N})$. The definitions of $W^{1,p}_{V}(\mathbb{R}^{N})$ and $L^{\infty}(\mathbb{R}^{N})$ will be given below. Here, the functional $I(u)$  corresponding to the equation (\ref{PL}) is defined as
$$I(u)=\frac{1}{p}\int_{\mathbb{R}^{N}}A(x,u)|\nabla u|^{p}dx+\frac{1}{p}\int_{\mathbb{R}^{N}}V(x)|u|^{p}dx-\int_{\mathbb{R}^{N}}G(x,u)dx,$$
where $G(x,t)=\int_{0}^{t}f(x,s)ds$, and the representation of $\langle dI(u),\phi\rangle$ is as follows:
for any $\phi\in C^{\infty}_{0}(\mathbb{R}^{N})$,
$$\langle dI(u),\phi\rangle=\int_{\mathbb{R}^{N}}A(x,u)|\nabla u|^{p-2}\nabla u\nabla\phi+\frac{1}{p}A_{t}(x,u)\phi|\nabla u|^{p}
+V(x)|u|^{p-2}u\phi-g(x,u)\phi dx.$$

Next, we provide the relevant assumptions for $A(x,t)$, $V(x)$ and $g(x,t)$:
\begin{enumerate}
  \item[($V_{1}$).] $V(x)\in C(\mathbb{R}^{N}, \mathbb{R})$, $\inf_{x\in\mathbb{R}^{N}}V(x)>0$.
  \item[($V_{2}$).] $\lim_{|x|\rightarrow+\infty}V(x)=\infty$.
  \item[($h_{0}$).] $A(x,t)=a_{1}+a_{2}|t|^{\alpha p}$, where $\alpha p>1$, $a_{1}$, $a_{2}$ are positive constant.
  \item[($g_{0}$).] $g(x,t)\in C(\mathbb{R}^{N}\times\mathbb{R},\mathbb{R})$, and is odd with respect to $t$.
  \item[($g_{1}$).] $\lim_{t\rightarrow 0}\frac{g(x,t)}{|t|^{p-1}}=0$, $\lim_{t\rightarrow \infty}\frac{g(x,t)}{|t|^{p-1}}=+\infty$. And there exist $q\in(p(1+\alpha),\frac{Np(1+\alpha)}{N-p})$ and $C>0$, such that $g(x,t)\leq C(1+|t|^{q-1})$.
  \item[($g_{2}$).]There exists $\mu\in(p(1+\alpha),\frac{Np(1+\alpha)}{N-p})$,  such that $0<\mu G(x,t)\leq g(x,t)t$ for any $x\in\mathbb{R}^{N}$, $t\neq0$.
\end{enumerate}

Now, for the existence of multiple solutions of quasi-linear equation (\ref{PL}),we have the following theorem.
\begin{theorem}\label{THM1.1} \rm{Let $2<p<N$. Assume that the above conditions ($V_{1}$), ($V_{2}$), ($h_{0}$) and ($g_{0}$)-($g_{2}$) hold. Then the equation $(\ref{PL})$ has infinitely many solutions.}
\end{theorem}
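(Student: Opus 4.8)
The plan is to realize the variational perturbation method by adding a higher-order term that simultaneously regularizes $I$ and forces the critical sequences into $L^\infty(\mathbb{R}^N)$. Concretely, fix an exponent $q>\max\{N,\,p(1+\alpha)\}$ and, for $\lambda\in(0,1]$, work on the perturbation space $E=W^{1,q}(\mathbb{R}^N)\cap W^{1,p}_V(\mathbb{R}^N)$ with the perturbed functional
$$I_\lambda(u)=\frac{\lambda}{q}\int_{\mathbb{R}^N}\big(|\nabla u|^q+|u|^q\big)\,dx+I(u).$$
Because $q>N$ yields the embedding $W^{1,q}(\mathbb{R}^N)\hookrightarrow L^\infty(\mathbb{R}^N)$, every $u\in E$ is bounded, so under $(h_0)$ the troublesome term $\int |u|^{\alpha p}|\nabla u|^p\,dx$ and its G\^{a}teaux derivative are well defined and continuous; hence $I_\lambda\in C^1(E,\mathbb{R})$, in contrast to $I$ on $W^{1,p}_V\cap L^\infty$. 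The oddness hypothesis $(g_0)$ makes $I_\lambda$ even, which is precisely the symmetry required to run a symmetric minimax scheme.

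First I would verify that $I_\lambda$ fits the symmetric mountain pass theorem for each fixed $\lambda$. The behaviour of $g$ near the origin in $(g_1)$ gives the mountain-pass geometry at $0$, while the Ambrosetti--Rabinowitz condition $(g_2)$ with $\mu>p(1+\alpha)$ produces the global superlinear structure and, together with the coercive perturbation and the compact embeddings $W^{1,p}_V(\mathbb{R}^N)\hookrightarrow\hookrightarrow L^s(\mathbb{R}^N)$ supplied by the coercivity $(V_2)$, the Palais--Smale (or Cerami) condition for $I_\lambda$ on $E$. Invoking the symmetric mountain pass theorem then yields, for each $\lambda$, an unbounded sequence of critical values $c_1^\lambda\le c_2^\lambda\le\cdots\to\infty$ with corresponding critical points $u_k^\lambda\in E$ of $I_\lambda$, i.e.\ solutions of the regularized equation.

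The decisive step is the passage to the limit $\lambda\to0^{+}$ with $k$ fixed. Using the minimax characterization of $c_k^\lambda$ over a $\lambda$-independent finite-dimensional subspace I would bound $c_k^\lambda$ above by a constant $C_k$ uniform in $\lambda$; combined with $(g_2)$ this produces uniform bounds $\|u_k^\lambda\|_{W^{1,p}_V}\le C_k$ and $\lambda\int_{\mathbb{R}^N}(|\nabla u_k^\lambda|^q+|u_k^\lambda|^q)\,dx\to0$. A Moser-type iteration, exploiting $(h_0)$ and the subcritical growth in $(g_1)$, must then deliver an $L^\infty$ bound on $u_k^\lambda$ that is uniform in $\lambda$; this is the main obstacle, since without it the weak limit need not remain bounded and the quasi-linear term cannot be controlled. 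Granting it, I would extract $u_k^\lambda\rightharpoonup u_k$ in $W^{1,p}_V$, strongly in the relevant $L^s$, pass to the limit in $\langle dI_\lambda(u_k^\lambda),\phi\rangle=0$ to conclude $\langle dI(u_k),\phi\rangle=0$ for all $\phi\in C_0^\infty(\mathbb{R}^N)$, and check that the persistence of the level structure $c_k^\lambda\to c_k$ with $c_k\to\infty$ prevents the $u_k$ from collapsing onto a finite set, thereby producing infinitely many solutions of $(\ref{PL})$ in $W^{1,p}_V\cap L^\infty$. In effect the argument reduces the multiplicity for $(\ref{PL})$ to the known symmetric mountain pass multiplicity theory for the classical $p$-Laplacian, as anticipated in the introduction.
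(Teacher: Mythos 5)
Your overall strategy is the paper's strategy (perturb $I$ by a higher-order norm, run the symmetric mountain pass theorem for each fixed perturbation parameter, then let the parameter tend to $0$), but the specific choice of perturbation exponent $q>\max\{N,\,p(1+\alpha)\}$ creates a genuine problem. The boundedness of Palais--Smale sequences and the uniform bounds as $\lambda\to 0$ both rest on the Ambrosetti--Rabinowitz computation $I_\lambda(u)-\tfrac1\mu\langle I_\lambda'(u),u\rangle$, in which the perturbation contributes $\lambda\bigl(\tfrac1q-\tfrac1\mu\bigr)\int(|\nabla u|^q+|u|^q)\,dx$. Condition $(g_2)$ only guarantees $\mu\in\bigl(p(1+\alpha),\tfrac{Np(1+\alpha)}{N-p}\bigr)$, so with your choice one may well have $\mu<q$ (e.g.\ $N=5$, $p=3$, $\alpha p=3/2$, $\mu=4.6$, $q>5$), and the perturbation term then enters with a negative sign, destroying the coercivity of the AR combination. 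This is exactly why the paper takes $r=(1+\alpha)p<\mu$: the exponent is calibrated to the quasilinear growth, not to the Morrey embedding. The price is that $u\in X=W^{1,r}\cap W^{1,p}_V$ is no longer automatically in $L^\infty$; the paper recovers well-definedness and $C^1$-smoothness of $\int|u|^{\alpha p}|\nabla u|^p\,dx$ directly from $r=(1+\alpha)p$ (Proposition \ref{PRO3.2}) and obtains the $L^\infty$ bound separately by Moser iteration (Step 2 of Lemma \ref{LEM3.4}), which is needed anyway to make the bound uniform in the perturbation parameter.

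Two further steps that you flag only in passing are where the real work lies. First, passing to the limit in $\langle dI_\lambda(u_k^\lambda),\phi\rangle=0$ is not a routine weak-convergence argument: the term $\tfrac1p\int A_t(x,u_n)|\nabla u_n|^p\phi\,dx$ is not weakly continuous in $\nabla u_n$, and the paper handles it with the exponential test functions $\phi=\psi e^{\pm Ku_n}$ combined with Fatou's lemma to obtain the two one-sided inequalities (Step 3 of Lemma \ref{LEM3.4}); some such device is indispensable. Second, you assert $c_k\to\infty$ for the limiting levels but give no mechanism: for fixed $\lambda$ the symmetric mountain pass theorem gives $c_k^\lambda\to\infty$ in $k$, but the lower bound it provides is a single constant $\alpha$ independent of $k$, so after taking $\lambda\to0$ the levels $c_k=\lim_{\lambda\to0}c_k^\lambda$ could a priori all coincide, yielding only finitely many solutions. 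The paper closes this by bounding $I_\nu(u)\ge L(w)$ with $w=|u|^\alpha u$, where $L$ is a $\nu$-independent classical $p$-Laplacian functional whose symmetric minimax levels are known to diverge (via \cite{LT13NA}); you need a $\lambda$-uniform divergent lower bound of this kind, and your sketch does not supply one.
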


When $A(x,u)$ remains constant, it can be expressed as the following classical $p$-Laplacian equation
\begin{equation}\label{CPL}
-\Delta_{p}u+V(x)|u|^{p-2}u=g(x,u),
\end{equation}
where $\Delta_{p}u:=div(|\nabla u|^{p-2}\nabla u)$. Actually, there are many studies on the existence and multiplicity of solutions for the classical $p$-Laplacian equations. Such as, in bounded domains, the studies on the existence of solutions to the classical $p$-Laplacian can refer to \cite{BL04JDE,CM95NA,FL00NA,LZ01NA}, the study of multiplicity can be refer to \cite{AAP96JFA,BCS13NoDEA,BCS13DCDS,BLW05PLMS,CP06ANS,CPY12JFA,HP07MM,JS03JMAA,LL15AMAS,LZ02JLMS}. And in the unbounded region, there are also some studies on the existence of solutions to the classical $p$-Laplacian equations, which can be referred to \cite{CM95JMAA,Y92PAMS}. It should be pointed out that in reference \cite{BCS13NoDEA}, it mentioned that when $p=2$, the spectrum of $-\Delta$ is composed of a sequence of divergent eigenvalues $(\lambda_{k})_{k}$, but the current research on the spectral analysis of $-\Delta_{p}$ is not comprehensive. This is also one of the difficulties we need to overcome when establishing the existence of infinitely many solutions to equation (\ref{PL}).

Besides that, there are also many studies on the classical $p$-Laplacian equation in the whole space $\mathbb{R}^{N}$. For instance, in \cite{SL03MN}, Su and Lin proved that the classical non-autonomous $p$-Laplacian equation has  at least one positive solution and one negative solution. Alves and Figueiredo proved in \cite{AF06DIE} that when the potential function satisfies suitable conditions,  the $p$-Laplacian equation has multiple positive solutions with small parameter. In \cite{D09NA}, Dai proved that the  $p(x)$-Laplacian equation has infinitely many solutions. In \cite{LZ11JMAA}, Lin and Zheng demonstrated the existence of non-trivial solutions to the $p(x)$-Laplacian equation with potential function. Afterwards, similar to the conditions mentioned in \cite{LZ11JMAA},  Lin and Tang verified the existence of infinitely many solutions to the $p(x)$-Laplacian equation in \cite{LT13NA}. Furthermore, for the scenario in which the potential function is bounded, the existence of multiple solutions was proved through truncation in references \cite{LZ17ANS,ZLL17JMAA}. Of course, there are still many studies on the solutions of the classical $p$-Laplacian equation, relevant references can refer to the literature mentioned above.

 For the sake of readability, the paper is organized as follows. In Section 2, we provide some necessary symbol representations and the symmetry mountain path theorem. In Section 3, we mainly divide into three steps to prove the existence of multiple solutions to the equation $(\ref{PL})$. Firstly, we analyze the continuous differentiability of perturbation functional by constructing perturbation space in Proposition \ref{PRO3.2}. Subsequently, we prove the convergence theorem for the perturbation functional in Lemma \ref{LEM3.4}. Secondly, we prove the perturbation functional satisfies the Palais-Smale condition in Lemma \ref{LEM3.5}. Finally, we will use the symmetric mountain pass theorem and some technical analysis to prove Theorem \ref{THM1.1}.


\section{Preliminary}

~~~~For the convenience of the following proof, we will provide some relevant notations and concepts below.
\begin{itemize}
  \item $L^{p}(\mathbb{R}^{N})$ denotes the classical Lebesgue space with norm $\|u\|_{L^{p}}:=(\int_{\mathbb{R}^{N}}|u|^{p}dx)^{\frac{1}{p}}$, where $1\leq p<+\infty$.
  \item $L^{p}_{V}(\mathbb{R}^{N})$ denotes the weighted Lebesgue space with norm $\|u\|_{V,p}:=(\int_{\mathbb{R}^{N}}V(x)|u|^{p}dx)^{\frac{1}{p}}$.
  \item $W^{1,p}(\mathbb{R}^{N})$ denote the classical Sobolev space with norm $\|u\|_{W^{1,p}}:=(\|\nabla u\|_{p}^{p}+\|u\|_{p}^{p})^{\frac{1}{p}}$.
  \item $W^{1,p}_{V}(\mathbb{R}^{N}):=\{u\in W^{1,p}(\mathbb{R}^{N})~| ~ \int_{\mathbb{R}^{N}}V(x)|u|^{p}<\infty\}$ denote the weighted Sobolev space with norm $\|u\|_{W^{1,p}_{V}}=(\|\nabla u\|_{p}^{p}+\|u\|^{p}_{V,p})^{\frac{1}{p}}$.
  \item $L^{\infty}(\mathbb{R}^{N}):=\{u:\mathbb{R}^{N}\rightarrow\mathbb{R}~ is ~a ~measurable ~function~ |~\textrm{ess~sup}_{\mathbb{R}^{N}}|u(x)|<\infty\}$ with the norm $\|u\|_{L^{\infty}}:=\inf\{C ~| ~\,|u(x)|\leq C ~for ~a.e.~ x\in\mathbb{R}^{N} \}$.
\end{itemize}
\textbf{Symmetric Mountain Pass Theorem}(See \cite{R65CBMS}) Let $E$ be an infinite dimensional Banach space,
$I\in C^{1}(E,\mathbb{R}^{N})$ be an even functional that satisfies the Palais-Smale condition[short for (PS)], and $I(0)=0$. If $E=F\oplus W$, where $F$ is a finite dimensional space, and $I$ satisfies
\begin{enumerate}
  \item[(1).] for each finite dimensional subspace $\tilde{E}\subset E$, there is an $R=R(\tilde{E})>0$, such that $I(u)<0$ on $\tilde{E}/B_{R(\tilde{E})}$,
  \item[(2).] there are constants $\rho,\alpha>0$ such that $I_{\partial B_{\rho}\cap W}\geq\alpha$.
\end{enumerate}
Then $I$ possesses an unbounded sequence of critical values.


\section{The proof of multiple solutions}

~~~~To prove the existence of multiple solutions for the non-autonomous $p$-Laplacian equation (\ref{PL}), we will proceed with the following steps: firstly, we shall analyze the continuous differentiability of the perturbation functional $I_{\nu}(u)$ by constructing the perturbation space $X:=W^{1, r}(\mathbb{R}^{N})\cap W_{V}^{1,p}(\mathbb{R}^{N})$. Based in this, the important convergence lemma(Lemma \ref{LEM3.4}) concerning the perturbation functional $I_{\nu}(u)$ can be established. Secondly, we will prove that the perturbation functional satisfies the Palais-Smale condition. Finally, the existence of infinite solutions of equation (\ref{PL}) can be proved by using the symmetric mountain pass theorem and some related analyzes.
\subsection{The continuous differentiability of the perturbation functional $I_{\nu}(u)$}

~~~~Since the functional (\ref{pf}) corresponding to the $p$-laplacian equation (\ref{PL}) is not $C^{1}$ on $W_{V}^{1,p}(\mathbb{R}^{N})$.
\begin{equation}\label{pf}
I(u)=\frac{1}{p}\int_{\mathbb{R}^{N}}A(x,u)|\nabla u|^{p}dx+\frac{1}{p}\int_{\mathbb{R}^{N}}V(x)|u|^{p}dx-\int_{\mathbb{R}^{N}}G(x,u)dx
\end{equation}
In order to overcome the non-smooth difficulty of the quasi-linear functional $I(u)$, we will consider its perturbation functional (\ref{ppf}) in space $X:=W^{1, r}(\mathbb{R}^{N})\cap W_{V}^{1,p}(\mathbb{R}^{N})$, here we set $r=(1+\alpha)p$.
\begin{align}\label{ppf}
I_{\nu}(u)=\frac{\nu}{2}\int_{\mathbb{R}^{N}}(|\nabla u|^{r}+|u|^{r})dx+I(u),
\end{align}
where $\nu\in(0,1]$. $u$ is called a solution to the following formula (\ref{pws}) if and only if $\langle dI_{\nu}(u),\phi\rangle=0$ for all $\phi\in X$, i.e. $u$ satisfies the following form
\begin{align}\label{pws}
&\nu\int_{\mathbb{R}^{N}}(|\nabla u|^{r-2}\nabla u\nabla \phi+|u|^{r-2}u\phi)dx+\langle dI(u),\phi\rangle\nonumber\\
=&\nu\int_{\mathbb{R}^{N}}(|\nabla u|^{r}\nabla u\nabla \phi+|u|^{r}u\phi)dx+\int_{\mathbb{R}^{N}}A(x,u)|\nabla u|^{p-2}\nabla u\nabla\phi dx\nonumber\\
&~~+\frac{1}{p}\int_{\mathbb{R}^{N}}A_{t}(x,u)\phi|\nabla u|^{p}dx+\int_{\mathbb{R}^{N}}V(x)|u|^{p-2}u\phi dx-\int_{\mathbb{R}^{N}}g(x,u)\phi dx=0.
\end{align}
\begin{lemma}\label{LEM3.1}\rm{
For any $x,y\in\mathbb{R}^{m}$, $m\geq1$, there exists a constant $C_{0}>0$, such that
\begin{equation}\label{bd1}
||x|^{s-2}x-|y|^{s-2}y|\leq C_{0}|x-y|(|x|+|y|)^{s-2}, ~for~ s>2.
\end{equation}
\begin{equation}\label{bd2}
||x|^{s-2}x-|y|^{s-2}y|\leq C_{0}|x-y|^{s-1}, ~for ~1<s\leq2.
\end{equation}}
\end{lemma}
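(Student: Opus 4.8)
The plan is to regard $F(z):=|z|^{s-2}z$ as a vector field on $\mathbb{R}^{m}$ and to control the increment $F(x)-F(y)$ via the fundamental theorem of calculus along the segment $z_{t}:=y+t(x-y)$, $t\in[0,1]$. A direct computation shows that, for $z\neq0$, $F$ is $C^{1}$ with Jacobian
\begin{equation*}
DF(z)=|z|^{s-2}\,\mathrm{Id}+(s-2)|z|^{s-4}\,z\otimes z,
\end{equation*}
which is symmetric with eigenvalue $(s-1)|z|^{s-2}$ in the direction of $z$ and eigenvalue $|z|^{s-2}$ on $z^{\perp}$. Hence its operator norm equals $\max\{s-1,1\}\,|z|^{s-2}$. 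This single computation drives both inequalities; the only genuine difference between the two regimes is whether $F$ is differentiable across $z=0$ and whether the integrand $|z_{t}|^{s-2}$ stays integrable there.

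For $s>2$ the exponent $s-2>0$ makes $DF$ extend continuously by $DF(0)=0$, so $F\in C^{1}(\mathbb{R}^{m})$ and the segment may be used without restriction. Writing
\begin{equation*}
F(x)-F(y)=\int_{0}^{1} DF(z_{t})(x-y)\,dt
\end{equation*}
and using $\|DF(z_{t})\|\le(s-1)|z_{t}|^{s-2}$ together with $|z_{t}|\le(1-t)|y|+t|x|\le|x|+|y|$ and the monotonicity of $\tau\mapsto\tau^{s-2}$, I would obtain $|F(x)-F(y)|\le(s-1)|x-y|(|x|+|y|)^{s-2}$, which is (\ref{bd1}) with $C_{0}=s-1$.

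For $1<s\le2$ the map $F$ fails to be differentiable at the origin and $|z|^{s-2}$ is unbounded there, so the segment integral is delicate precisely when $z_{t}$ passes near $0$; this is the main obstacle. I would circumvent it by a scaling-free case split, assuming without loss of generality $|x|\ge|y|$ (the case $y=0$ being immediate since then both sides reduce to $|x|^{s-1}$). If $|x-y|\ge|x|/2$, the triangle inequality and $|F(z)|=|z|^{s-1}$ give $|F(x)-F(y)|\le|x|^{s-1}+|y|^{s-1}\le2|x|^{s-1}\le2^{s}|x-y|^{s-1}$ directly. If instead $|x-y|<|x|/2$, then from $z_{t}=x-(1-t)(x-y)$ one gets the elementary bound $|z_{t}|\ge|x|-(1-t)|x-y|\ge\tfrac{1+t}{2}|x|\ge|x|/2$ for all $t\in[0,1]$, so the whole segment stays away from the origin. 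Applying the fundamental theorem of calculus as before with $\|DF(z_{t})\|=|z_{t}|^{s-2}\le(|x|/2)^{s-2}$ (here $s-2\le0$) and then using $|x|>2|x-y|$ yields $|F(x)-F(y)|\le|x-y|^{s-1}$. Taking $C_{0}=2^{s}$ covers both cases and establishes (\ref{bd2}).

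The routine parts are the Jacobian and eigenvalue computation and the two monotonicity estimates for $\tau\mapsto\tau^{s-2}$. The one point requiring real care is the lower bound $|z_{t}|\ge|x|/2$ in the near-diagonal subcase for $1<s\le2$: it is exactly what keeps the singular integrand $|z_{t}|^{s-2}$ away from the origin and makes the fundamental-theorem-of-calculus argument legitimate there, while the complementary far subcase is disposed of purely by the triangle inequality.
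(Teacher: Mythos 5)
Your proof is correct and complete. Note that the paper itself states Lemma \ref{LEM3.1} without any proof, treating it as a standard elementary inequality, so there is no argument in the paper to compare against. Your route --- the fundamental theorem of calculus along the segment $z_t=(1-t)y+tx$, with the exact Jacobian $DF(z)=|z|^{s-2}\mathrm{Id}+(s-2)|z|^{s-4}z\otimes z$ and its eigenvalues $(s-1)|z|^{s-2}$ and $|z|^{s-2}$ --- is the standard derivation, and the one delicate point for $1<s\le 2$ (the possible passage of the segment through the origin, where $|z|^{s-2}$ is singular) is handled correctly by the split into $|x-y|\ge|x|/2$ (triangle inequality) and $|x-y|<|x|/2$ (where $|z_t|\ge|x|/2$ keeps the integrand bounded and $(|x|/2)^{s-2}\le|x-y|^{s-2}$ closes the estimate). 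The explicit constants $C_0=s-1$ and $C_0=2^s$ are also fine.
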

Next, we will prove that the perturbation functional $I_{\nu}(u)$ is continuously differentiable on the perturbation space $X$ by the following proposition.
\begin{proposition}\label{PRO3.2}\rm{
For $2<p<N$, suppose that the condition $(V_{1})$, $(h_{0})$ and $(g_{0}),(g_{1})$ given in Theorem \ref{THM1.1} are hold. If $\{u_{n}\}$ converges strongly to $u$ in
$X:=W^{1,r}(\mathbb{R}^{N})\cap W^{1,p}_{V}(\mathbb{R}^{N})$, then
$I_{\nu}(u_{n})\rightarrow I_{\nu}(u)$ and
$\|dI_{\nu}(u_{n})-dI_{\nu}(u)\|_{X^{'}}\rightarrow 0$ as $n\rightarrow\infty$, i.e., $I_{\nu}(u)$ is a $C^{1}$ functional in the perturbation space $X$.}
\end{proposition}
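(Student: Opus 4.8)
The plan is to establish the two displayed convergences, which together with the term-by-term Gâteaux differentiability of $I_\nu$ promote it to a Fréchet $C^1$ functional via the classical criterion (Gâteaux differentiable with continuous Gâteaux derivative implies $C^1$). Accordingly I would first record that the Gâteaux derivative of $I_\nu$ at $u$ is exactly the functional $\phi \mapsto \langle dI_\nu(u),\phi\rangle$ written in (\ref{pws}); differentiating each integrand in $t$ and justifying the passage to the limit under the integral by the mean value theorem together with the Hölder bounds below presents no real difficulty once those bounds are in place. The substance of the proof is therefore the two limits $I_\nu(u_n)\to I_\nu(u)$ and $\|dI_\nu(u_n)-dI_\nu(u)\|_{X'}\to 0$, which I would attack by splitting $I_\nu$ into four blocks: the perturbation block $\frac{\nu}{2}\int(|\nabla u|^r+|u|^r)$, the quasilinear principal block $\frac1p\int A(x,u)|\nabla u|^p$ with $A(x,u)=a_1+a_2|u|^{\alpha p}$, the potential block $\frac1p\int V(x)|u|^p$, and the nonlinear block $\int G(x,u)$. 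The decisive bookkeeping fact, used throughout, is that the choice $r=(1+\alpha)p$ makes every Hölder pairing land on conjugate exponents.

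First note what $X$-convergence supplies: since $\|\cdot\|_{W^{1,p}_V}$ controls $\|\nabla u\|_{L^p}$ and $\|u\|_{V,p}$, while $\|\cdot\|_{W^{1,r}}$ controls $\|\nabla u\|_{L^r}$ and $\|u\|_{L^r}$, convergence $u_n\to u$ in $X$ yields $\nabla u_n\to\nabla u$ in both $L^p$ and $L^r$, $u_n\to u$ in $L^r$ and in $L^p_V$; using $\inf V>0$ from $(V_1)$ we also get $u_n\to u$ in $L^p$, and interpolating $L^r$ against the Sobolev embedding range of $W^{1,r}$ gives $u_n\to u$ in $L^q$ for the exponent $q\in(p(1+\alpha),\frac{Np(1+\alpha)}{N-p})$ of $(g_1)$. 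For the functional convergence I would then treat the blocks in turn: the perturbation and $\int V|u|^p$ blocks are continuous because $w\mapsto\int|w|^s$ is continuous on $L^s$; the $a_1$ part of the principal block is continuous by $L^p$-convergence of $\nabla u_n$; the coupled part $a_2\int|u_n|^{\alpha p}|\nabla u_n|^p$ converges because $v\mapsto|v|^{\alpha p}$ maps $L^r\to L^{r/(\alpha p)}$ and $w\mapsto|w|^p$ maps $L^r\to L^{r/p}$ continuously and these two exponents are conjugate, so the product converges in $L^1$; and $\int G(x,u_n)\to\int G(x,u)$ follows from the growth $|G(x,t)|\le C(|t|^p+|t|^q)$ implied by $(g_1)$ together with $L^p$- and $L^q$-convergence.

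The derivative convergence is the heart of the matter, and I would bound $\sup_{\|\phi\|_X\le 1}|\langle dI_\nu(u_n)-dI_\nu(u),\phi\rangle|$ block by block, pairing every factor against $\nabla\phi\in L^r$ or $\phi\in L^r,L^q$ (each controlled by $\|\phi\|_X$). For the perturbation block I would use Lemma \ref{LEM3.1}, inequality (\ref{bd1}) with $s=r>2$, to obtain $\||\nabla u_n|^{r-2}\nabla u_n-|\nabla u|^{r-2}\nabla u\|_{L^{r'}}\to 0$ from $\nabla u_n\to\nabla u$ in $L^r$ (the continuity of the Nemytskii map $w\mapsto|w|^{r-2}w:L^r\to L^{r'}$, since $(r-1)r'=r$), and likewise for the $u$-term. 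The potential block is handled by the same mechanism relative to the weighted measure $V\,dx$, using $u_n\to u$ in $L^p_V$ and (\ref{bd1}) with $s=p$. The nonlinear block reduces to continuity of $t\mapsto g(x,t):L^q\to L^{q'}$ under the growth in $(g_1)$ (here $(q-1)q'=q$), applied to $u_n\to u$ in $L^q$. The two genuinely quasilinear couplings — $a_2\,|u|^{\alpha p}|\nabla u|^{p-2}\nabla u$ from the principal block and the $A_t$ contribution $\sim|u|^{\alpha p-2}u|\nabla u|^p$ (well-defined and continuous because $(h_0)$ gives $\alpha p>1$) — I would split as
\begin{align*}
|u_n|^{\alpha p}|\nabla u_n|^{p-2}\nabla u_n-|u|^{\alpha p}|\nabla u|^{p-2}\nabla u
&=\bigl(|u_n|^{\alpha p}-|u|^{\alpha p}\bigr)|\nabla u_n|^{p-2}\nabla u_n \\
&\quad+|u|^{\alpha p}\bigl(|\nabla u_n|^{p-2}\nabla u_n-|\nabla u|^{p-2}\nabla u\bigr),
\end{align*}
and analogously for the $A_t$ term, then estimate each summand in $L^{r'}$ by Hölder. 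Here the arithmetic $\alpha p+(p-1)=r-1$ (resp. $(\alpha p-1)+p=r-1$) forces $(r-1)r'=r$, so every factor sits in the correct dual Lebesgue space and pairs boundedly with $\nabla\phi$ (resp. $\phi$) in $L^r$, while a.e. convergence along a subsequence plus Lemma \ref{LEM3.1} drive each summand to $0$.

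The main obstacle is exactly these two coupled terms: unlike the pure power terms they are not a single Nemytskii operator between fixed Lebesgue spaces, so I cannot invoke a one-line continuity statement, and because the domain is all of $\mathbb{R}^N$ there is no fixed dominating function for a naive dominated-convergence argument. My remedy is the split above combined with passing to a.e.-convergent subsequences and using the pointwise estimates (\ref{bd1})--(\ref{bd2}) to secure equi-integrability, so that every summand converges in the relevant $L^{r'}$ (or weighted $L^{p'}$) norm; a standard subsequence argument then upgrades this to convergence of the full sequence. The one point demanding genuine care is verifying that the exponents produced by $r=(1+\alpha)p$ are truly conjugate at each application of Hölder — which is precisely why this value of $r$, rather than an arbitrary perturbation exponent, is the correct choice for the perturbation space $X$.
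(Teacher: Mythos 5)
Your proposal is correct and follows essentially the same route as the paper: the identical four-block decomposition of $I_\nu$ (perturbation, quasilinear principal, potential, and nonlinear parts), with Lemma \ref{LEM3.1} and H\"older pairings built on the conjugate-exponent arithmetic of $r=(1+\alpha)p$ doing the work in each block. Your extra care with the coupled terms $|u|^{\alpha p}|\nabla u|^{p-2}\nabla u$ and the $A_t$ contribution, and with the G\^ateaux-to-Fr\'echet upgrade, only fills in details the paper leaves implicit.
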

\begin{proof}
For convenience, we will write the perturbation functional (\ref{ppf}) as
$I_{\nu}(u)=A_{1}(u)+A_{2}(u)+A_{3}(u)-A_{4}(u)$. Here,
$$A_{1}(u)=\frac{\nu}{2}\int_{\mathbb{R}^{N}}(|\nabla u|^{r}+|u|^{r})dx,~~A_{2}(u)=\frac{1}{p}\int_{\mathbb{R}^{N}}A(x,u)|\nabla u|^{p}dx,$$
$$A_{3}(u)=\frac{1}{p}\int_{\mathbb{R}^{N}}V(x)|u|^{p}dx,~~ A_{4}(u)=\int_{\mathbb{R}^{N}}G(x,u)dx.$$
The G$\hat{a}$teaux differentials corresponding to the above formulas are
\begin{align*}&\langle dA_{1}(u),\phi\rangle
=\nu\int_{\mathbb{R}^{N}}(|\nabla u|^{r-2}\nabla u\nabla \phi+|u|^{r-2}u\phi)dx,
\end{align*}
$$\langle dA_{2}(u),\phi\rangle=\int_{\mathbb{R}^{N}}A(x,u)|\nabla u|^{p-2}\nabla u\nabla\phi dx+\frac{1}{p}\int_{\mathbb{R}^{N}}A_{t}(x,u)\phi|\nabla u|^{p}dx,$$
$$\langle dA_{3}(u),\phi\rangle=\int_{\mathbb{R}^{N}}V(x)|u|^{p-2}u\phi dx,~~\langle dA_{4}(u),\phi\rangle=\int_{\mathbb{R}^{N}}g(x,u)\phi dx.$$
Now, we demonstrate the continuous differentiability of the perturbation functional $I_{\nu}(u)$ in four steps:\\
\textbf{Step 1:} we need to prove that $A_{1}(u_{n})\rightarrow A_{1}(u)$ and $\|dA_{1}(u_{n})-dA_{1}(u)\|_{X^{'}}\rightarrow 0$ as $n\rightarrow\infty$.\\
Due to $\|u_{n}-u\|_{X}\rightarrow0$, we have $A_{1}(u_{n})\rightarrow A_{1}(u)$. To prove $\|dA_{1}(u_{n})-dA_{1}(u)\|_{X^{'}}\rightarrow 0$, it need to prove
$$\sup_{\|\phi\|_{X}=1}|\langle dA_{1}(u_{n})-dA_{1}(u),\phi\rangle|\rightarrow 0.$$
Here, \begin{align*}
&\sup_{\|\phi\|_{X}=1}|\langle dA_{1}(u_{n})-dA_{1}(u),\phi\rangle|\\
=&\sup_{\|\phi\|_{X}=1}|\nu
\int_{\mathbb{R}^{N}}(|\nabla u_{n}|^{r-2}\nabla u_{n}-|\nabla u|^{r-2}\nabla u)\nabla \phi +(|u_{n}|^{r-2}u_{n}-|u|^{r-2}u)\phi dx|\\
\leq&\sup_{\|\phi\|_{X}=1}|\nu\int_{\mathbb{R}^{N}}(|\nabla u_{n}|^{r-2}\nabla u_{n}-|\nabla u|^{r-2}\nabla u)\nabla \phi dx|\\
&\qquad+\sup_{\|\phi\|_{X}=1}|\nu\int_{\mathbb{R}^{N}}(|u_{n}|^{r-2}u_{n}-|u|^{r-2}u)\phi dx|\\
=:&~~\mathbb{I}+\mathbb{II}.
\end{align*}
Next, we prove that $\mathbb{I}\rightarrow0$ and $\mathbb{II}\rightarrow0$ as $n\rightarrow\infty$. It should be pointed out that the proof of $\mathbb{II}$ is similar to $\mathbb{I}$. To avoid repetition, we just prove that $\mathbb{I}\rightarrow0$ as $n\rightarrow\infty$. Since $p>2$ and $\alpha p>1$, we have $r=(1+\alpha)p>2$. Then according to Lemma \ref{LEM3.1}, $\|u_{n}-u\|_{X}\rightarrow0$  and H\"{o}lder inequality, we have
\begin{align*}
\mathbb{I}&\leq\sup_{\|\phi\|_{X}=1}|\int_{\mathbb{R}^{N}}|\nabla u_{n}-\nabla u||\nabla u_{n}+\nabla u|^{r-2}|\nabla\phi| dx|\\
&\leq[\int_{\mathbb{R}^{N}}|\nabla u_{n}-\nabla u|^{r}dx]^{\frac{1}{r}}[\int_{\mathbb{R}^{N}}|\nabla u_{n}+\nabla u|^{r}dx]^{\frac{r-2}{r}}\rightarrow0.
\end{align*}
In the same way, the proof of $\mathbb{II}\rightarrow0$ is easy to prove. Thus, $\|dA_{1}(u_{n})-dA_{1}(u)\|_{X^{'}}\rightarrow 0$.\\
\textbf{Step 2:} The proof of $A_{2}(u_{n})\rightarrow A_{2}(u)$ and $\|dA_{2}(u_{n})-dA_{2}(u)\|_{X^{'}}\rightarrow 0$ as $n\rightarrow\infty$.\\
Under condition ($h_{0}$), according to Lemma \ref{LEM3.1} and H\"{o}lder inequality, we have
\begin{align*}
A_{2}(u_{n})-A_{2}(u)=&\frac{1}{p}\int_{\mathbb{R}^{N}}
(a_{1}+a_{2}|u_{n}|^{p\alpha}) (|\nabla u_{n}|^{p}-|\nabla u|^{p})dx\\
&\qquad+\frac{1}{p}\int_{\mathbb{R}^{N}}a_{2}(|u_{n}|^{p\alpha}-|u|^{p\alpha})|\nabla u|^{p}dx\\
\rightarrow&0,
\end{align*}
and
\begin{align*}
&\sup_{\|\phi\|_{X}=1}|\langle dA_{2}(u_{n})-dA_{2}(u),\phi\rangle|\\
\leq&\sup_{\|\phi\|_{X}=1}|\int_{\mathbb{R}^{N}}(a_{1}
+a_{2}|u_{n}|^{p\alpha}) (|\nabla u_{n}|^{p-2}\nabla u_{n}- |\nabla u|^{p-2}\nabla u)\nabla \phi dx|\\
&+a_{2}\sup_{\|\phi\|_{X}=1}|\int_{\mathbb{R}^{N}}(|u_{n}|^{p\alpha}-|u|^{p\alpha})|\nabla u|^{p-1}\nabla\phi dx|\\
&+\alpha a_{2}\sup_{\|\phi\|_{X}=1}|\int_{\mathbb{R}^{N}}|u_{n}|^{p\alpha-1}(|\nabla u_{n}|^{p}-|\nabla u|^{p})\phi dx|\\
&+\alpha a_{2}\sup_{\|\phi\|_{X}=1}|\int_{\mathbb{R}^{N}}(|u_{n}|^{p\alpha-1}-|u|^{p\alpha-1})|\nabla u|^{p}\phi dx|\rightarrow0.
\end{align*}
\textbf{Step 3:} In conditions $(g_{0}), (g_{1})$, according to the Dominated Convergence Theorem and H\"{o}lder inequality, we have $A_{4}(u_{n})\rightarrow A_{4}(u)$ and $\|dA_{4}(u_{n})-dA_{4}(u)\|_{X^{'}}\rightarrow 0$ as $n\rightarrow0$.\\
\textbf{Step 4:} Now we prove that $A_{3}(u_{n})\rightarrow A_{3}(u)$ and $\|dA_{3}(u_{n})-dA_{3}(u)\|_{X^{'}}\rightarrow 0$ as $n\rightarrow0$.\\
Following from $\|u_{n}-u\|_{W^{1,p}_{V}}\rightarrow0$, we can deduce that $A_{3}(u_{n})\rightarrow A_{3}(u)$ easily. For $\sup_{\|\phi\|_{X}=1}|\langle dA_{3}(u_{n})-dA_{3}(u),\phi\rangle|$, since $p>2$, then by Lemma \ref{LEM3.1} and H\"{o}lder inequality, we have
\begin{align*}&\sup_{\|\phi\|_{X}=1}|\langle dA_{3}(u_{n})-dA_{3}(u),\phi\rangle|\\
\leq&\sup_{\|\phi\|_{X}=1}|\int_{\mathbb{R}^{N}}|V(x)|^{\frac{1}{p}}
|V(x)|^{\frac{p-1}{p}}|u_{n}^{p-1}-u^{p-1}||\phi|dx|\\
\leq&\sup_{\|\phi\|_{X}=1}|(\int_{\mathbb{R}^{N}}|V(x)||u_{n}-u|^{\frac{p}{p-1}}|u_{n}+u|^{\frac{p(p-2)}{p-1}}dx)^{\frac{p-1}{p}}
(\int_{\mathbb{R}^{N}}|V(x)||\phi|^{p}dx)^{\frac{1}{p}}|\\
\leq&(\int_{\mathbb{R}^{N}}|V(x)||u_{n}-u|^{p}dx)^{\frac{1}{p}}
(\int_{\mathbb{R}^{N}}|V(x)||u_{n}+u|^{p}dx)^{\frac{p-2}{p}}\\
\rightarrow &0.
\end{align*}
In summary, it can be concluded that the perturbation functional $I_{\nu}(u)$ is continuously differentiable on the perturbation space $X:=W^{1,r}(\mathbb{R}^{N})\cap W^{1,p}_{V}(\mathbb{R}^{N})$.
\end{proof}
\begin{remark}\label{REK3.1}\rm{If $\alpha p\geq0$, the perturbation functional $I_{\nu}(u)$ is continuous in $W^{1,p(1+\alpha)}\cap W^{1,p}_{V}(\mathbb{R}^{N})$. To ensure differentiability, $\alpha p>1$ is also required.}
\end{remark}
Next, in the case where the perturbation functional $I_{\nu}(u)$ is continuously differentiable, we will prove the following important convergence lemma.
\begin{lemma}\label{LEM3.4}\rm{Under conditions $(V_{1}), ~(V_{2})$, $(h_{0})$ and $(g_{0})-(g_{2})$, assume that
$I_{\nu_{n}}(u_{n})\leq C$, \,$I_{\nu_{n}}^{'}(u_{n})=0$, and $\nu_{n}\rightarrow0$($n\rightarrow\infty$) for $\{u_{n}\}\subset X$. Then there exists $u\in W^{1,p}_{V}(\mathbb{R}^{N})\cap L^{\infty}(\mathbb{R}^{N})$ as the weak solution of equation (\ref{PL}), and for the subsequence of $\{u_{n}\}$, still denote as $\{u_{n}\}$, it satisfies $\|u_{n}-u\|_{W^{1,p}_{V}}\rightarrow 0$, $\|(\nabla u_{n})|u_{n}|^{\alpha}-(\nabla u)|u|^{\alpha}\|_{L^{p}}\rightarrow0$, $\nu_{n}\|u_{n}\|^{r}_{W^{1,r}}\rightarrow 0$ and $I(u)=\lim_{n\rightarrow\infty}I_{\nu_{n}}(u_{n})$.}
\end{lemma}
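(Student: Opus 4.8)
The plan is to run the compactness analysis of the variational perturbation method in four stages, the core of which is passing to the limit in the quasi-linear principal part once the perturbation has been switched off.

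\emph{Boundedness.} First I would test the Euler--Lagrange identity $\langle dI_{\nu_n}(u_n),u_n\rangle=0$ against $u_n$ and form $I_{\nu_n}(u_n)-\tfrac1\mu\langle dI_{\nu_n}(u_n),u_n\rangle$. Since $(h_0)$ gives $A_t(x,t)=a_2\alpha p\,|t|^{\alpha p-2}t$, so that $\tfrac1p A_t(x,u)u=a_2\alpha|u|^{\alpha p}$, the principal terms collapse to $a_1\|\nabla u_n\|_p^p+a_2(1+\alpha)\int|u_n|^{\alpha p}|\nabla u_n|^p+\|u_n\|_{V,p}^p$. Using $\mu>p(1+\alpha)=r>2$ together with the Ambrosetti--Rabinowitz type condition $(g_2)$, each coefficient $\tfrac1p-\tfrac1\mu$, $\tfrac1p-\tfrac{1+\alpha}\mu$, $\tfrac12-\tfrac1\mu$ is strictly positive, so from $I_{\nu_n}(u_n)\le C$ I obtain uniform bounds on $\|u_n\|_{W^{1,p}_V}$, on $\int|u_n|^{\alpha p}|\nabla u_n|^p$, and on $\nu_n\|u_n\|^r_{W^{1,r}}$.

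\emph{Weak limits and compactness.} The coercivity $(V_2)$ makes the embedding $W^{1,p}_V(\mathbb{R}^N)\hookrightarrow L^s(\mathbb{R}^N)$ compact for $p\le s<p^{*}=\tfrac{Np}{N-p}$, and also yields tightness ($\int_{|x|>R}|u_n|^p\le C/\inf_{|x|>R}V\to0$ uniformly). Hence along a subsequence $u_n\rightharpoonup u$ in $W^{1,p}_V$, a.e., and $u_n\to u$ in $L^s$ for $p\le s<p^{*}$. Setting $w_n:=|u_n|^\alpha u_n$, the bound on $\int|u_n|^{\alpha p}|\nabla u_n|^p=(1+\alpha)^{-p}\|\nabla w_n\|_p^p$ shows $\{w_n\}$ is bounded in $W^{1,p}$, whence $u_n$ is bounded in $L^{(1+\alpha)p^{*}}$ and $w_n\rightharpoonup|u|^\alpha u$ (limit fixed by a.e. convergence). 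Interpolating the uniform tail bound against this $L^{(1+\alpha)p^{*}}$ bound gives strong convergence $u_n\to u$ in $L^q$ for all $p\le q<(1+\alpha)p^{*}$, precisely the range appearing in $(g_1)$--$(g_2)$.

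\emph{Passage to the limit (the main obstacle).} Testing $\langle dI_{\nu_n}(u_n),\phi\rangle=0$ against fixed $\phi\in C_0^\infty(\mathbb{R}^N)$, the perturbation contribution dies because $\nu_n\|\nabla u_n\|_r^{r-1}\le\nu_n^{1/r}(\nu_n\|u_n\|^r_{W^{1,r}})^{(r-1)/r}\to0$. The difficulty, which I expect to be the crux, is to pass to the limit in the genuinely quasi-linear terms $\int A(x,u_n)|\nabla u_n|^{p-2}\nabla u_n\nabla\phi$ and especially in the natural-growth term $a_2\alpha\int|u_n|^{\alpha p-2}u_n|\nabla u_n|^p\phi$; both require $\nabla u_n\to\nabla u$ a.e. I would establish this by a Boccardo--Murat type monotonicity argument: test with $u_n-\varphi_j$ for $\varphi_j\in C_0^\infty\to u$ in $W^{1,p}_V$ (admissible since $\varphi_j,u_n\in X$, and the perturbation and $A_t$ contributions are controlled via the bounds of Step 1 and the inequalities of Lemma \ref{LEM3.1}), use the monotonicity of $\xi\mapsto|\xi|^{p-2}\xi$ to force the monotone integrand to $0$ locally, then let $j\to\infty$. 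With a.e. gradient convergence secured, $(g_0)$--$(g_1)$ and the $L^q$/a.e. convergence let me pass to the limit and conclude that $u$ solves (\ref{PL}) weakly; the regularity $u\in L^\infty(\mathbb{R}^N)$ then follows from a Moser/De Giorgi iteration exploiting the subcritical growth in $(g_1)$ and the ellipticity $a_1\le A(x,u)$.

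\emph{Strong convergence and vanishing of the perturbation.} Finally I exploit the energy identities. Writing $P_n$ for the sum of the three nonnegative principal terms evaluated at $u_n$ and $P$ for the same at $u$, the identities $\langle dI_{\nu_n}(u_n),u_n\rangle=0$ and $\langle dI(u),u\rangle=0$ (the latter legitimate by density since $u\in W^{1,p}_V\cap L^\infty$) give $\nu_n\|u_n\|^r_{W^{1,r}}+P_n=\int g(x,u_n)u_n\to\int g(x,u)u=P$, where the right-hand convergence uses the strong $L^q$ convergence and $(g_1)$. Since each summand of $P_n$ is weakly lower semicontinuous (the middle one through $w_n$), $\liminf P_n\ge P$, and together with the displayed limit this forces $\nu_n\|u_n\|^r_{W^{1,r}}\to0$ and $P_n\to P$ termwise. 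Uniform convexity of $L^p$ then upgrades the weak convergences to $\|u_n-u\|_{W^{1,p}_V}\to0$ and $\|(\nabla u_n)|u_n|^{\alpha}-(\nabla u)|u|^{\alpha}\|_{L^p}\to0$, and $I(u)=\lim_n I_{\nu_n}(u_n)$ follows from these convergences together with $\nu_n\|u_n\|^r_{W^{1,r}}\to0$.
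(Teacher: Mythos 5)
Your overall architecture (energy bound via the Ambrosetti--Rabinowitz combination, compact embedding from the coercive potential plus interpolation up to $L^{(1+\alpha)p^{*}}$, limit passage in the Euler--Lagrange identity, then the energy-comparison step forcing $\nu_n\|u_n\|^r_{W^{1,r}}\to0$ and termwise convergence of the principal parts) coincides with the paper's, and your first and last stages are essentially its Step 1 and Step 4. The divergence, and the gap, is in the middle. The paper devotes an entire step (its Step 2) to proving a \emph{uniform} $L^{\infty}$ bound on the sequence $\{u_n\}$ by a Moser iteration on the perturbed equations, testing with $\phi=|u_n|^{2k_i}u_n$ for a calibrated sequence of exponents $2k_0=\frac{Np(1+\alpha)}{N-p}-q$, $2k_i=[2k_{i-1}+(1+\alpha)p]\frac{N}{N-p}-q$. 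You omit this entirely and only claim $u\in L^{\infty}$ a posteriori for the limit function. That uniform bound is not a luxury here: it is what makes the limit passage in the natural-growth term tractable.

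Concretely, your Boccardo--Murat step does not close as stated. Testing with $u_n-\varphi_j$ produces the term $\frac1p\int A_t(x,u_n)(u_n-\varphi_j)|\nabla u_n|^p\,dx=a_2\alpha\int|u_n|^{\alpha p-2}u_n(u_n-\varphi_j)|\nabla u_n|^p\,dx$. The Step-1 bounds only give that $|u_n|^{\alpha p-1}|\nabla u_n|^p$ is bounded in $L^1$; since $u_n-\varphi_j$ is not small in $L^{\infty}$ and the density $|u_n|^{\alpha p-1}|\nabla u_n|^p$ is not equi-integrable against it, this term is merely bounded, not $o(1)$, so the monotone quantity $\int(|\nabla u_n|^{p-2}\nabla u_n-|\nabla\varphi_j|^{p-2}\nabla\varphi_j)(\nabla u_n-\nabla\varphi_j)$ cannot be forced to zero. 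The classical Boccardo--Murat treatment of natural-growth terms requires either a uniform $L^{\infty}$ bound or exponential weights in the test function, and the paper takes exactly the latter route in its Step 3: it tests with $\phi=\psi e^{\mp Ku_n}$, chooses $K$ so large that $\frac1pA_t(x,u_n)\mp KA(x,u_n)$ has a sign, and applies Fatou's lemma to obtain the two opposite inequalities whose combination is the weak equation for $u$ --- an argument that itself needs the uniform $L^{\infty}$ bound on $u_n$ (to keep $e^{\mp Ku_n}$ bounded and to approximate $\varphi e^{Ku}$ by admissible $\psi_n$). So you should either insert the uniform Moser iteration of the paper's Step 2 and then run the exponential-test-function argument, or, if you insist on the monotonicity route, you still need that uniform $L^{\infty}$ bound first; as written, the "controlled via the bounds of Step 1 and Lemma \ref{LEM3.1}" claim is the missing idea.
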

\begin{proof}
Now, we will prove this lemma in four steps.\\
\textbf{Step 1:} It need to prove that $\{u_{n}\}$ is bounded in $W^{1,p}_{V}(\mathbb{R}^{N})$, and $\{|u_{n}|^{\alpha}\nabla u_{n}\}$ is bounded in $L^{p}(\mathbb{R}^{N})$. \\
Following from conditions $(h_{0})$ and $(g_{2})$, there exists a constant $\beta_{0}>0$ satisfies
$$(\mu-p)A(x,t)-A_{t}(x,t)t\geq\beta_{0}A(x,u).$$ According to conditions $(V_{1})$, $(h_{0})$ and $(g_{2})$, we have
\begin{align*}
&I_{\nu}(u)-\frac{1}{\mu}\langle I^{'}_{\nu}(u),u\rangle\\
=&(\frac{\mu-2}{2\mu})\nu\int_{\mathbb{R}^{N}}(|\nabla u|^{r}+|u|^{r})dx
+\frac{1}{p\mu}\int_{\mathbb{R}^{N}}[(\mu-p)A(x,u)-A_{t}(x,u)u]|\nabla u|^{p}dx\\
&+\frac{\mu-p}{p\mu}\int_{\mathbb{R}^{N}}V(x)|u|^{p}dx
+\frac{1}{\mu}\int_{\mathbb{R}^{N}}[g(x,u)u-\mu G(x,u)]dx\\
\geq&(\frac{\mu-2}{2\mu})\nu\int_{\mathbb{R}^{N}}(|\nabla u|^{r}+|u|^{r})dx+\beta_{0}\int_{\mathbb{R}^{N}}[a_{1}(x)+a_{2}(x)|u|^{\alpha p}]|\nabla u|^{p}dx\\
&+\frac{\mu-p}{p\mu}\int_{\mathbb{R}^{N}}V(x)|u|^{p}dx.
\end{align*}
Since $I_{\nu_{n}}(u_{n})\leq C$, we have $\{u_{n}\}$ is bounded in $W^{1,p}_{V}(\mathbb{R}^{N})$ and $\{|u_{n}|^{\alpha}\nabla u_{n}\}$ is bounded in $L^{p}(\mathbb{R}^{N})$. Then
\begin{align*}
&u_{n}\rightharpoonup u ~in ~W^{1,p}_{V}(\mathbb{R}^{N}),\\
&u_{n} \rightarrow u ~in~L^{s_{1}}(\mathbb{R}^{N}), ~~ s_{1}\in(p,p^{*}), ~p^{*}=\frac{pN}{N-p},\\
&u_{n} \rightarrow u ~a.e. ~in ~\mathbb{R}^{N},\\
&(\nabla u_{n})|u_{n}|^{\alpha}\rightharpoonup(\nabla u)|u|^{\alpha}~in ~L^{p}(\mathbb{R}^{N}).
\end{align*}
Under conditions $(V_{1})$ and $(V_{2})$, we claim that $\{u_{n}\}$ strongly converges to $u$ in $L^{s}(\mathbb{R}^{N})$ for $s\in[p,(1+\alpha)p^{*})$. In order to prove this claim, we set $w_{n}=u_{n}-u$, if it can be proven that $\{w_{n}\}$ strongly converges to zero in $L^{s}(\mathbb{R}^{N})$, the claim can be proven.
To this end, we need to prove that for $s=p$, $\{w_{n}\}$ strongly converges to zero in $L^{p}(\mathbb{R}^{N})$ first. The details are as follows: according to the equivalence relationship between the weakly convergence and the strongly convergence in bounded regions, we know that $\{w_{n}\}$ strongly converges to zero in $L^{p}(B_{R})$ for any given $R>0$. Here, $B_{R}:=\{x\in\mathbb{R}^{N},\,|x|\leq R\}$. By $(V_{2})$, we know that for any $\epsilon>0$, there exists a constant $R_{1}>0$ such that for any $R\geq R_{1}$, $$\frac{C^{p}}{\inf_{B_{R}^{c}}V(x)}<\frac{\epsilon}{2}.$$
Then, by fixing $R_{1}$, there exists $N_{1}>0$ such that $\int_{B_{R_{1}}}|w_{n}|^{p}\leq\frac{\epsilon}{2}$ for any $n\geq N_{1}$. Thus, for $n\geq N_{1}$, according to the boundedness of $\{u_{n}\}$ in $W^{1,p}_{V}(\mathbb{R}^{N})$, we have
\begin{align*}
\int_{\mathbb{R}^{N}}|w_{n}|^{p}dx&=\int_{B_{R_{1}}}|w_{n}|^{p}dx
+\int_{B_{R_{1}}^{c}}|w_{n}|^{p}dx\\
&\leq\frac{\epsilon}{2}+\frac{1}{\inf_{B_{R_{1}}^{c}}V(x)}\int_{B_{R_{1}}^{c}}V(x)|w_{n}|^{p}dx\\
&\leq \epsilon.
\end{align*}
Therefore, $\{w_{n}\}$ strongly converges to zero in $L^{p}(\mathbb{R}^{N})$. Next, for $s\in(p,(1+\alpha)p^{*})$, according to the boundedness of $\{(\nabla u_{n})|u_{n}|^{\alpha}\}$ in $L^{p}(\mathbb{R}^{N})$, and the interpolation inequality, we have
\begin{align*}
\|w_{n}\|_{s}&\leq\|w_{n}\|_{L^{p}}^{\theta}\|w_{n}\|_{L^{(1+\alpha)p^{*}}}^{1-\theta}\\
&\leq C\|w_{n}\|_{L^{p}}^{\theta}\rightarrow0,
\end{align*}
where $\theta$ satisfies $\frac{1}{s}=\frac{\theta}{p}+\frac{1-\theta}{(1+\alpha)p^{*}}$.\\
\textbf{Step 2:} It need to prove that $\{u_{n}\}$ is uniformly bounded in $L^{\infty}(\mathbb{R}^{N})$.\\
Following from the assumption $I^{'}_{\nu_{n}}(u_{n})=0$, we know that for any $\phi\in X$,
\begin{align*}
&\nu_{n}\int_{\mathbb{R}^{N}}(|\nabla u_{n}|^{r-2}\nabla u_{n}\nabla \phi+|u_{n}|^{r-2}u_{n}\phi)dx+\int_{\mathbb{R}^{N}}A(x,u_{n})|\nabla u_{n}|^{p-2}\nabla u_{n}\nabla\phi dx\\
&~~+\frac{1}{p}\int_{\mathbb{R}^{N}}A_{t}(x,u_{n})\phi|\nabla u_{n}|^{p}dx
+\int_{\mathbb{R}^{N}}V(x)|u_{n}|^{p-2}u_{n}\phi dx-\int_{\mathbb{R}^{N}}g(x,u_{n})\phi dx=0.
\end{align*}
In the above equation, taking $\phi=|u_{n}|^{2k_{0}}u_{n}$, where $2k_{0}=\frac{Np}{N-p}(1+\alpha)-q>0$. Then, by $(V_{1}), (g_{1})$ and $(h_{0})$, there exists constants $C_{1}>0$, $C>0$, such that
\begin{align*}
C_{1}\int_{\mathbb{R}^{N}}|\nabla u_{n}|^{p}u_{n}^{2k_{0}+\alpha p}dx+\int_{\mathbb{R}^{N}}V(x)|u_{n}|^{2k_{0}+p}dx
\leq C\int_{\mathbb{R}^{N}}|u_{n}|^{2k_{0}+q}dx,
\end{align*}
where $2k_{0}+p<\frac{Np}{N-p}(1+\alpha)$. According to the Sobolev embedding theorem,
we have
\begin{align*}
&(\int_{\mathbb{R}^{N}}|u_{n}|^{(\frac{2k_{0}}{p}+\alpha+1)\frac{Np}{N-p}}dx)^{\frac{N-p}{N}}\\
\leq &C_{1}\int_{\mathbb{R}^{N}}|\nabla u_{n}|^{p}u_{n}^{2k_{0}+\alpha p}dx\\
\leq &C\int_{\mathbb{R}^{N}}|u_{n}|^{2k_{0}+q}dx=:M_{0}\\
= &C\int_{\mathbb{R}^{N}}|u_{n}|^{\frac{Np(1+\alpha)}{N-p}}dx.
\end{align*}
Next, taking $2k_{i}=[2k_{i-1}+(1+\alpha)p]\frac{N}{N-p}-q$, $i\geq1$, then
\begin{align*}
(\int_{\mathbb{R}^{N}}|u_{n}|^{(\frac{2k_{i}}{p}+\alpha+1)\frac{Np}{N-p}}dx)^{\frac{N-p}{N}}
\leq C\int_{\mathbb{R}^{N}}|u_{n}|^{2k_{i}+q}dx=: M_{i}.
\end{align*}
By iterating in sequence, there exists $C_{i}>0$ such that
$$\|u_{n}\|_{L^{2k_{i}+q}}\leq C_{i}^{\frac{1}{2k_{i}+q}}(M_{0})^{(\frac{N}{N-p})^{i}\cdot\frac{1}{2k_{i}+q}}.$$
It should be pointed out that
\begin{align*}
2k_{i}+q=2k_{0}\cdot(\frac{N}{N-p})^{i}+[\frac{Np(1+\alpha)}{N-p}-q]
\cdot\sum^{i-1}_{t=1}(\frac{N}{N-p})^{t}+\frac{Np(1+\alpha)}{N-p},
\end{align*}
Here, when $i=1$, setting $\sum^{i-1}_{t=1}(\frac{N}{N-p})^{t}=0$. Then for $i\geq1$, we have $$\frac{(\frac{N}{N-p})^{i}}{2k_{i}+q}<\frac{1}{2k_{0}}.$$
Thus, $2k_{i}+q\rightarrow\infty$ as $i\rightarrow\infty$. Therefore, $\|u_{n}\|_{L^{\infty}}\leq C$, and $\|u\|_{L^{\infty}}\leq C$ by weakly convergence.\\
\textbf{Step 3:} It should be prove that the weak limit $u\in W^{1,p}_{V}(\mathbb{R}^{N})\cap L^{\infty}(\mathbb{R}^{N})$ and satisfies $\langle d\mathcal{J}(u),\phi\rangle=0$.\\
In (3.3), by choosing $\phi=\psi e^{-Ku_{n}}$ with $\psi\in C^{\infty}_{0}(\mathbb{R}^{N})$, $\psi\geq0$, we have
\begin{align*}
&\nu_{n}\int_{\mathbb{R}^{N}}(|\nabla u_{n}|^{r-2}\nabla u_{n}\nabla\psi-K|\nabla u_{n}|^{r}\psi+|u_{n}|^{r-2}u_{n}\psi)e^{-Ku_{n}}dx\\
&+\int_{\mathbb{R}^{N}}[\frac{1}{p}A_{t}(x,u_{n})-KA(x,u_{n})]|\nabla u_{n}|^{p}\psi e^{-Ku_{n}}dx\\
&+\int_{\mathbb{R}^{N}}A(x,u_{n})|\nabla u_{n}|^{p-2}\nabla u_{n}\nabla\psi e^{-Ku_{n}}dx+\int_{\mathbb{R}^{N}}V(x)|u_{n}|^{p-2}u_{n}\psi e^{-Ku_{n}}dx\\
=&\int_{\mathbb{R}^{N}}g(x,u_{n})\psi e^{-Ku_{n}}dx.
\end{align*}
At this point, since $\{u_{n}\}$ weakly converges to $u$ in $W^{1,p}_{V}(\mathbb{R}^{N})$, and by the Dominated convergence theorem, we have
\begin{align*}
\int_{\mathbb{R}^{N}}A(x,u_{n})|\nabla u_{n}|^{p-2}\nabla u_{n}\nabla\psi e^{-Ku_{n}}dx
&\rightarrow\int_{\mathbb{R}^{N}}A(x,u)|\nabla u|^{p-2}\nabla u\nabla\psi e^{-Ku}dx,\\
\int_{\mathbb{R}^{N}}V(x)|u_{n}|^{p-2}u_{n}\psi e^{-Ku_{n}}dx&\rightarrow\int_{\mathbb{R}^{N}}V(x)|u|^{p-2}u\psi e^{-Ku}dx,\\
\int_{\mathbb{R}^{N}}g(x,u_{n})\psi e^{-Ku_{n}}dx&\rightarrow\int_{\mathbb{R}^{N}}g(x,u)\psi e^{-Ku}dx.
\end{align*}
Choosing $K>0$ large enough, such that $\frac{1}{p}A_{t}(x,u_{n})-KA(x,u_{n})\leq0$. By Fatou's Lemma, we get
\begin{align*}
&\liminf_{n\rightarrow\infty}\int_{\mathbb{R}^{N}}[\frac{1}{p}A_{t}(x,u_{n})-KA(x,u_{n})]|\nabla u_{n}|^{p}\psi e^{-Ku_{n}}dx\\
\leq&
\int_{\mathbb{R}^{N}}[\frac{1}{p}A_{t}(x,u)-KA(x,u)]|\nabla u|^{p}\psi e^{-Ku}dx,
\end{align*}
Then we deduce that
\begin{align*}
0&\leq\int_{\mathbb{R}^{N}}[\frac{1}{p}A_{t}(x,u)-KA(x,u)]|\nabla u|^{p}\psi e^{-Ku}dx
+\int_{\mathbb{R}^{N}}A(x,u)|\nabla u|^{p-2}\nabla u\nabla\psi e^{-Ku}dx\\
&\qquad+\int_{\mathbb{R}^{N}}V(x)|u|^{p-2}u\psi e^{-Ku}dx
-\int_{\mathbb{R}^{N}}g(x,u)\psi e^{-Ku}dx.
\end{align*}
Let $\varphi\in C^{\infty}_{0}(\mathbb{R}^{N})$, $\varphi\geq0$. Choose a sequence of non-negative functions $\psi_{n}\in C^{\infty}_{0}(\mathbb{R}^{N})$ such that $\psi_{n}\rightarrow\varphi e^{Ku}$ in $W^{1,p}_{V}(\mathbb{R}^{N})$, and $\psi_{n}$ is uniformly bounded in $L^{\infty}(\mathbb{R}^{N})$. Let $\psi=\psi_{n}$ in the above inequality, we can obtain that for all $\varphi\in C^{\infty}_{0}(\mathbb{R}^{N})$, \,$\varphi\geq0$,
\begin{align*}
\int_{\mathbb{R}^{N}}A(x,u)|\nabla u|^{p-2}\nabla u\nabla\varphi&+\frac{1}{p}\int_{\mathbb{R}^{N}}A_{t}(x,u)\varphi|\nabla u|^{p}dx\\
&+\int_{\mathbb{R}^{N}}V(x)|u|^{p-2}u\varphi dx-\int_{\mathbb{R}^{N}}g(x,u)\varphi \geq0.
\end{align*}
Similarly, by choosing $\phi=\psi e^{Ku_{n}}$, and repeating the above analysis, we know that for any $\varphi\in C^{\infty}_{0}(\mathbb{R}^{N})$, \,$\varphi\geq0$,
\begin{align*}
\int_{\mathbb{R}^{N}}A(x,u)|\nabla u|^{p-2}\nabla u\nabla\varphi&+\frac{1}{p}\int_{\mathbb{R}^{N}}A_{t}(x,u)\varphi|\nabla u|^{p}dx\\
&+\int_{\mathbb{R}^{N}}V(x)|u|^{p-2}u\varphi dx-\int_{\mathbb{R}^{N}}g(x,u)\varphi dx=0.
\end{align*}
\textbf{Step 4:} In this step, we prove that $u_{n}$ strongly converges to $u$ in $W^{1,p}_{V}(\mathbb{R}^{N})$.\\
According to the analysis of Step 1, $u_{n}$ strongly converges to $u$ in $L^{s}(\mathbb{R}^{N})$, where $s\in[p,(1+\alpha)p^{*})$. Then, following from the Dominated convergence theorem, the H\"{o}lder inequality and $(g_{1})$, we obtain that
$$\int_{\mathbb{R}^{N}}g(x,u_{n})u_{n}dx\rightarrow\int_{\mathbb{R}^{N}}g(x,u)udx.$$
And since in Step 3, we have proven that $\langle dI(u),\phi\rangle=0$ for any $\phi\in W^{1,p}_{V}(\mathbb{R}^{N})\cap L^{\infty}(\mathbb{R}^{N})$. Setting $\phi=u$, we have
\begin{align*}
\int_{\mathbb{R}^{N}}A(x,u)|\nabla u|^{p}dx&+\frac{1}{p}\int_{\mathbb{R}^{N}}A_{t}(x,u)u|\nabla u|^{p}dx\\
&+\int_{\mathbb{R}^{N}}V(x)|u|^{p}dx-\int_{\mathbb{R}^{N}}g(x,u)u dx=0.
\end{align*}
And following from the assumption $I_{\nu_{n}}^{'}(u_{n})=0$, we obtain that
\begin{align*}
\nu_{n}\int_{\mathbb{R}^{N}}(|\nabla u_{n}|^{r}+|u_{n}|^{r})dx
+&\int_{\mathbb{R}^{N}}A(x,u_{n})|\nabla u_{n}|^{p}dx+\frac{1}{p}\int_{\mathbb{R}^{N}}A_{t}(x,u_{n})u_{n}|\nabla u_{n}|^{p}dx\\
&+\int_{\mathbb{R}^{N}}V(x)|u_{n}|^{p} dx-\int_{\mathbb{R}^{N}}g(x,u_{n})u_{n}dx=0.
\end{align*}
Then, by the uniform boundedness of $\|u_{n}\|_{L^{\infty}}$, and $(V_{1}), (h_{0}), (h_{1})$, we have
\begin{align*}
&\nu_{n}\int_{\mathbb{R}^{N}}(|\nabla u_{n}|^{r}+|u_{n}|^{r})dx\rightarrow0,\\
&\int_{\mathbb{R}^{N}}A(x,u_{n})|\nabla u_{n}|^{p}dx+\frac{1}{p}\int_{\mathbb{R}^{N}}A_{t}(x,u_{n})u_{n}|\nabla u_{n}|^{p}dx\\
\rightarrow&\int_{\mathbb{R}^{N}}A(x,u)|\nabla u|^{p}dx+\frac{1}{p}\int_{\mathbb{R}^{N}}A_{t}(x,u)u|\nabla u|^{p}dx,\\
&\int_{\mathbb{R}^{N}}V(x)|u_{n}|^{p} dx\rightarrow\int_{\mathbb{R}^{N}}V(x)|u|^{p}dx.
\end{align*}
Therefore, $u_{n}$ strongly converges to $u$ in $W^{1,p}_{V}(\mathbb{R}^{N})$.
\end{proof}

In order to prove the multiplicity of solutions of equation (\ref{PL}) by using the symmetric mountain pass theorem, the following lemma is also required.
\begin{lemma}\label{LEM3.5}\rm{For $2<p<N$, and fixed $\nu\in(0,1]$. Assume that $(V_{1}),\,(V_{2})$, $(h_{0})$ and $(g_{0})-(g_{2})$ hold. Then the perturbation functional $I_{\nu}(u)$ satisfies Palais-Smale condition in the perturbation space $X$.}
\end{lemma}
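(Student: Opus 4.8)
The plan is to verify the (PS) condition directly: starting from a sequence $\{u_n\}\subset X$ with $I_\nu(u_n)$ bounded and $\|dI_\nu(u_n)\|_{X'}\to 0$, I would first prove that $\{u_n\}$ is bounded in $X$, then extract a weakly convergent subsequence, and finally upgrade weak to strong convergence. For the boundedness I reuse the algebraic identity computed in the proof of Lemma \ref{LEM3.4}: since $\nu$ is now \emph{fixed} and positive and $\mu>p(1+\alpha)>2$, the quantity $I_\nu(u_n)-\frac1\mu\langle dI_\nu(u_n),u_n\rangle$ is bounded below by
$$\tfrac{\mu-2}{2\mu}\,\nu\,\|u_n\|_{W^{1,r}}^r+\beta_0 a_1\|\nabla u_n\|_{L^p}^p+\tfrac{\mu-p}{p\mu}\|u_n\|_{V,p}^p ,$$
while its left-hand side is at most $C+o(1)\|u_n\|_X$. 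Because $r,p>1$, Young's inequality absorbs the linear term $o(1)\|u_n\|_X$ into the right-hand side, which yields a uniform bound on $\|u_n\|_{W^{1,r}}$ and $\|u_n\|_{W^{1,p}_V}$, hence on $\|u_n\|_X$.

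From this bound I pass, along a subsequence, to $u_n\rightharpoonup u$ weakly in $X$, so weakly in both $W^{1,r}$ and $W^{1,p}_V$. The coercivity hypothesis $(V_2)$ makes the embedding $W^{1,p}_V(\mathbb{R}^N)\hookrightarrow L^s(\mathbb{R}^N)$ compact for $s\in[p,p^*)$, exactly as in Step~1 of Lemma \ref{LEM3.4}; combined with the $W^{1,r}$ bound and interpolation this gives $u_n\to u$ strongly in $L^s$ for $s\in[p,(1+\alpha)p^*)$ and a.e. in $\mathbb{R}^N$, together with the boundedness of $\{|u_n|^\alpha\nabla u_n\}$ in $L^p$ coming from $\||u_n|^\alpha\nabla u_n\|_{L^p}^p\le \|u_n\|_{L^r}^{\alpha p}\|\nabla u_n\|_{L^r}^p$.

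For the strong convergence I test $dI_\nu(u_n)-dI_\nu(u)$ against $u_n-u$. Since $\|dI_\nu(u_n)\|_{X'}\to0$ and $\{u_n-u\}$ is bounded, while $dI_\nu(u)\in X'$ is fixed and $u_n-u\rightharpoonup0$, one has $\langle dI_\nu(u_n)-dI_\nu(u),u_n-u\rangle\to0$. Decomposing $I_\nu=A_1+A_2+A_3-A_4$ as in Proposition \ref{PRO3.2}, the $A_1$-contribution is nonnegative and, by the monotonicity inequality for the $r$-power map with $r>2$, dominates $c\,\nu\|u_n-u\|_{W^{1,r}}^r$; the $A_3$-contribution is nonnegative and dominates $c\|u_n-u\|_{V,p}^p$; the principal part of the $A_2$-contribution, namely $\int A(x,u_n)\big(|\nabla u_n|^{p-2}\nabla u_n-|\nabla u|^{p-2}\nabla u\big)\cdot\nabla(u_n-u)\,dx$, is nonnegative and, since $A\ge a_1>0$, dominates $c\,a_1\|\nabla(u_n-u)\|_{L^p}^p$; finally the $A_4$-contribution $\int (g(x,u_n)-g(x,u))(u_n-u)\,dx$ tends to $0$ by the strong $L^s$-convergence and the subcritical growth $(g_1)$. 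If the remaining two pieces of $A_2$ tend to $0$, then each of these three nonnegative quantities is squeezed to $0$, which is precisely $\|u_n-u\|_X\to0$.

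The main obstacle is exactly these two leftover pieces: the coefficient-difference term $\int (A(x,u_n)-A(x,u))|\nabla u|^{p-2}\nabla u\cdot\nabla(u_n-u)\,dx$ and, more seriously, the non-monotone term $\frac1p\int (A_t(x,u_n)|\nabla u_n|^p-A_t(x,u)|\nabla u|^p)(u_n-u)\,dx$ arising from the $A_t$-part of $dA_2$. The first is routine: $A(x,u_n)-A(x,u)=a_2(|u_n|^{\alpha p}-|u|^{\alpha p})\to0$ strongly in a suitable $L^\sigma$ by the $L^s$-convergence, while $|\nabla u|^{p-1}$ and $\nabla(u_n-u)$ stay bounded in the dual exponents, so H\"older's inequality closes it as in Step~2 of Proposition \ref{PRO3.2}. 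The second term carries the genuine difficulty, since $A_t(x,u_n)|\nabla u_n|^p=a_2\alpha p\,|u_n|^{\alpha p-2}u_n|\nabla u_n|^p$ is quadratic in the gradient and is not controlled by monotonicity. I plan to handle it by first establishing the pointwise convergence $\nabla u_n\to\nabla u$ a.e.: a localized test-function argument on balls $B_R$ shows that the nonnegative integrand $(|\nabla u_n|^{p-2}\nabla u_n-|\nabla u|^{p-2}\nabla u)\cdot\nabla(u_n-u)$ tends to $0$ in $L^1(B_R)$, whence $\nabla u_n\to\nabla u$ a.e. along a further subsequence. Combining this with the $L^p$-boundedness of $\{|u_n|^\alpha\nabla u_n\}$ and the strong $L^s$-convergence of $u_n$, a Vitali/generalized dominated convergence argument then forces the $A_t$-term to $0$, completing the proof. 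An alternative avoiding the a.e.-gradient analysis would be to establish a uniform $L^\infty$ bound for the (PS) sequence by adapting the Moser iteration of Step~2 of Lemma \ref{LEM3.4}, after which $|u_n|^{\alpha p-2}u_n$ is bounded and the $A_t$-term is estimated by $\||\nabla u_n|^p\|_{L^{1+\alpha}}\|u_n-u\|_{L^{(1+\alpha)/\alpha}}\to0$; there the delicate point is absorbing the iteration error $\|dI_\nu(u_n)\|_{X'}\,\||u_n|^{2k}u_n\|_X$, which is why I expect the a.e.-gradient route to be cleaner.
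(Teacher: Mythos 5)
Your proposal is correct in its overall architecture and matches the paper's skeleton: boundedness of the (PS) sequence via $I_\nu(u_n)-\frac{1}{\mu}\langle dI_\nu(u_n),u_n\rangle$ exactly as in Step 1 of Lemma \ref{LEM3.4}, compact embedding from $(V_2)$ to get strong $L^s$ convergence, and then the monotonicity inequality $(|\xi|^{\gamma-2}\xi-|\zeta|^{\gamma-2}\zeta)\cdot(\xi-\zeta)\geq C_\gamma|\xi-\zeta|^\gamma$ applied to the $r$-term, the $a_1$-part of the principal term, and the $V$-term to squeeze out strong convergence in $X$. The only structural difference is that you test $dI_\nu(u_n)-dI_\nu(u)$ against $u_n-u$, whereas the paper tests $dI_\nu(u_n)-dI_\nu(u_m)$ against $u_n-u_m$ and concludes that the sequence is Cauchy; these are equivalent (the paper also inserts a case distinction on whether $\|u_n\|_{W^{1,r}}^r\to 0$, which your argument does not need). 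Where you genuinely diverge is the non-monotone $A_t$-term, which you correctly single out as the crux but then attack with heavier machinery (Boccardo--Murat-type a.e.\ gradient convergence plus Vitali, or Moser iteration) than is required. The paper disposes of it, in the notation of its $P_4$, by a direct three-exponent H\"older estimate that exploits precisely what the $\nu$-perturbation was designed to provide: $\nabla u_n$ is bounded in $L^r$ with $r=(1+\alpha)p$, so $|\nabla u_n|^p$ is bounded in $L^{1+\alpha}$, $|u_n|^{\alpha p-1}$ is bounded in $L^{r/(\alpha p-1)}$, and $u_n-u\to 0$ strongly in $L^{r}$; since $\frac{\alpha p-1}{r}+\frac{1}{1+\alpha}+\frac{1}{r}=1$, the whole term is $o(1)$ with no information on $\nabla u_n$ beyond boundedness. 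Your a.e.-gradient route would also work (and is the standard fallback when no such integrability surplus is available), but note that the Vitali step at the end still needs tightness over $\mathbb{R}^N$, which you would obtain from essentially the same H\"older pairing --- so the detour buys nothing here; also, in your Moser-iteration alternative the exponent $\frac{1+\alpha}{\alpha}$ may fall below $p$ when $\alpha>\frac{1}{p-1}$, so that particular pairing is not safe as written. I would recommend replacing the a.e.-gradient analysis by the direct H\"older estimate, which completes the proof along the paper's lines.
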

\begin{proof} Let $\{u_{n}\}$ be any Palais-Smale sequence of $I_{\nu}(u)$, that is,
\begin{align*}
I_{\nu}(u_{n})\leq C, ~~\lim_{n\rightarrow\infty}\|I_{\nu}(u_{n})\|_{X^{'}}\rightarrow0.
\end{align*}
Following from the Step 1 in the proof of Lemma \ref{LEM3.4}, we know that $\{u_{n}\}$ is bounded in
$X:=W^{1,r}(\mathbb{R}^{N})\cap W^{1,p}_{V}(\mathbb{R}^{N})$. Next, we will discuss the following two cases separately: the first case is $\int_{\mathbb{R}^{N}}(|\nabla u_{n}|^{r}+|u_{n}|^{r})dx\rightarrow0$. Then we can deduce that $\int_{\mathbb{R}^{N}}|u_{n}|^{s}\rightarrow0$ for all $s\in(r,\frac{Np(1+\alpha)}{N-p})$. Since $\{u_{n}\}$ is bounded in $L^{p}(\mathbb{R}^{N})$, then we have $\int_{\mathbb{R}^{N}}|u_{n}|^{\tilde{s}}dx\rightarrow0$ for all $\tilde{s}\in(p, \frac{Np(1+\alpha)}{N-p})$. By $(g_{1})$ and the interpolation inequality, we have $\int_{\mathbb{R}^{N}}g(x,u_{n})u_{n}dx\rightarrow0$. Moreover, we also get
\begin{align*}
&\nu\int_{\mathbb{R}^{N}}(|\nabla u_{n}|^{r}+|u_{n}|^{r})dx
+\int_{\mathbb{R}^{N}}A(x,u_{n})|\nabla u_{n}|^{p}dx+\frac{1}{p}\int_{\mathbb{R}^{N}}A_{t}(x,u_{n})u_{n}|\nabla u_{n}|^{p}dx\\
&~~+\int_{\mathbb{R}^{N}}V(x)|u_{n}|^{p} dx=\int_{\mathbb{R}^{N}}g(x,u_{n})u_{n} dx+\langle I{'}_{\nu}(u_{n}),u_{n}\rangle\rightarrow0.
\end{align*}
Thus, by the condition $(h_{0})$, we can deduce that $\{u_{n}\}$ strongly converges to zero in $X$. The seconde case: $\int_{\mathbb{R}^{N}}(|\nabla u_{n}|^{r}+|u_{n}|^{r})dx\rightarrow\eta_{0}>0$. Then we obtain that
\begin{align*}
o(1)&=\langle I^{'}_{\nu}(u_{n})-I^{'}_{\nu}(u_{m}),u_{n}-u_{m}\rangle\\
&=\nu
\int_{\mathbb{R}^{N}}(|\nabla u_{n}|^{r-2}\nabla u_{n}-|\nabla u_{m}|^{r-2}\nabla u_{m})(\nabla u_{n}-\nabla u_{m})dx\\
&~~+\nu
\int_{\mathbb{R}^{N}}(|u_{n}|^{r-2}u_{n}-|u_{m}|^{r-2}u_{m})(u_{n}-u_{m})dx\\
&~~+\int_{\mathbb{R}^{N}}[A(x,u_{n})|\nabla u_{n}|^{p-2}\nabla u_{n}-A(x,u_{m})|\nabla u_{m}|^{p-2}\nabla u_{m}](\nabla u_{n}-\nabla u_{m})dx\\
&~~+\frac{1}{p}\int_{\mathbb{R}^{N}}[A_{t}(x,u_{n})|\nabla u_{n}|^{p}-A_{t}(x,u_{m})|\nabla u_{m}|^{p}](u_{n}-u_{m})dx\\
&~~+\int_{\mathbb{R}^{N}}V(x)[|u_{n}|^{p-2}u_{n}-|u_{m}|^{p-2}u_{m}](u_{n}-u_{m})dx\\
&~~-\int_{\mathbb{R}^{N}}[g(x,u_{n})-g(x,u_{m})](u_{n}-u_{m})dx\\
&=: P_{1}+P_{2}+P_{3}+P_{4}+P_{5}+P_{6}.
\end{align*}
Next, we will analyze the above six items separately. In this process, the following basic inequality will be used: for any $\xi,\zeta\in\mathbb{R}^{N}$, there exists some $C_{\gamma}>0$ such that
$(|\xi|^{\gamma-2}\xi-|\zeta|^{\gamma-2}\zeta)(\xi-\zeta)\geq C_{\gamma}|\xi-\zeta|^{\gamma}$, $\gamma\geq2$. The details are as follows: since $p>2$, and $r=(1+\alpha)p>3$, we have
\begin{align*}
P_{1}+P_{2}\geq C_{r}\int_{\mathbb{R}^{N}}|\nabla u_{n}-\nabla u_{m}|^{r}dx+|u_{n}-u_{m}|^{r}dx.
\end{align*}
$$P_{5}\geq\int_{\mathbb{R}^{N}}V(x)|u_{n}-u_{m}|^{p}dx.$$
According to $(g_{1})$ and the Dominated Convergence Theorem, we have $P_{6}\rightarrow0$. Following from $(h_{0})$, the Dominated Convergence Theorem and the basic inequality mentioned above,  we obtain that
\begin{align*}
P_{3}&=\int_{\mathbb{R}^{N}}a_{1}(|\nabla u_{n}|^{p-2}\nabla u_{n}-|\nabla u_{m}|^{p-2}\nabla u_{m})(\nabla u_{n}-\nabla u_{m})dx\\
&\qquad+\int_{\mathbb{R}^{N}}a_{2}(|u_{n}|^{\alpha p}-|u_{m}|^{\alpha p})|\nabla u_{n}|^{p-2}\nabla u_{n}(\nabla u_{n}-\nabla u_{m})dx\\
&\qquad+\int_{\mathbb{R}^{N}}a_{2}|u_{m}|^{\alpha p}(|\nabla u_{n}|^{p-2}\nabla u_{n}-|\nabla u_{m}|^{p-2}\nabla u_{m})(\nabla u_{n}-\nabla u_{m})dx\\
&\geq\int_{\mathbb{R}^{N}}a_{1}|\nabla u_{n}-\nabla u_{m}|^{p}dx
+\int_{\mathbb{R}^{N}}a_{2}|u_{m}|^{\alpha p}|\nabla u_{n}-\nabla u_{m}|^{p}dx+o(1).
\end{align*}
According to the Sobolev embedding theorem, the H\"{o}lder inequality and the Dominated Convergence Theorem, we have
\begin{align*}
P_{4}&=\frac{1}{p}\int_{\mathbb{R}^{N}}a_{2}(|u_{n}|^{\alpha p-1}-|u_{m}|^{\alpha p-1})|\nabla u_{n}|^{p}(u_{n}-u_{m})dx\\
&\qquad+\frac{1}{p}\int_{\mathbb{R}^{N}}a_{2}|u_{m}|^{\alpha p-1}(|\nabla u_{n}|^{p}-|\nabla u_{m}|^{p})(u_{n}-u_{m})dx\\
&=
o(1).
\end{align*}
In summary, we have
\begin{align*}
o(1)&=\langle I^{'}_{\nu}(u_{n})-I^{'}_{\nu}(u_{m}),u_{n}-u_{m}\rangle\\
&\geq C_{r}\int_{\mathbb{R}^{N}}|\nabla u_{n}-\nabla u_{m}|^{r}dx+\int_{\mathbb{R}^{N}}|u_{n}-u_{m}|^{r}dx\\
&\qquad+\int_{\mathbb{R}^{N}}a_{1}|\nabla u_{n}-\nabla u_{m}|^{p}dx
+\int_{\mathbb{R}^{N}}a_{2}|u_{m}|^{\alpha p}|\nabla u_{n}-\nabla u_{m}|^{p}dx\\
&\qquad
+\int_{\mathbb{R}^{N}}V(x)|u_{n}-u_{m}|^{p}dx+o(1).
\end{align*}
Thus, it can be inferred that the PS sequence $\{u_{n}\}$ is the Cauchy sequence in $X:=W^{1,r}(\mathbb{R}^{N})\cap W^{1,p}_{V}(\mathbb{R}^{N})$. And, by the completeness of $X$, the Palais-Smale condition is hold.
\end{proof}

Next, we will use the symmetric mountain pass theorem to prove Theorem \ref{THM1.1}.

\textbf{The proof of Theorem \ref{THM1.1}.} It should be pointed out that the workspace of the perturbation functional $I_{\nu}(u)$ is $X:=W^{1,p}_{V}(\mathbb{R}^{N})\cap W^{1,r}(\mathbb{R}^{N})$, where $r=(1+\alpha)p$, \,$\alpha p>1$. First of all, for fixed $\nu\in(0,1]$, following from the conditions in the Introduction, we know that the perturbation functional $I_{\nu}(u)$ and the primitive functional $I(u)$ are even. Then, according to Proposition \ref{PRO3.2} and Lemma \ref{LEM3.5}, we know that $I_{\nu}(u)$ is $C^{1}$ and satisfies the Palais-Smale condition. Next, we will use the symmetric mountain pass theorem \cite{R65CBMS} mentioned in Section 2, and the following three steps to prove that the quasi-linear equation (\ref{PL}) has infinite solutions.\\
\textbf{Step 1:} under the assumptions mentioned in Section 1, we will verify that the perturbation functional $I_{\nu}(u)$ satisfies the two conditions of the symmetric mountain pass theorem mentioned in Section 2.\\
\textbf{Step 2:} we will prove that the minimax value $c_{j}(\nu)$ related to the perturbation functional $I_{\nu}(u)$ is uniformly bounded, where the definition of $c_{j}(\nu)$ will be given in the following text. At this point, according to Lemma \ref{LEM3.4}, we know that the primitive functional $I(u)$ has a critical point $u_{j}$ which satisfies $I(u)=c_{j}:=\lim_{\nu\rightarrow0}c_{j}(\nu)$.\\
\textbf{Step 3:} on the basis of Step 2, by estimating the nonlinear term of the perturbation functional $I_{\nu}(u)$, it can be proved through the existence of multiple solutions to the classical $p$-Laplacian equation that $c_{j}\rightarrow+\infty$ as $j\rightarrow\infty$.\\
The details are as follows:

\textbf{Step 1:} first, we need to prove that $I_{\nu}(u)$ satisfies $(1)$ in the symmetric mountain pass theorem mentioned in Section 2.\\
For each finite dimensional subsequence $E_{j}$ in $X$, $\dim E_{j}=j$. Choose $w\in E_{j}$. Following from $(g_{2})$, we know that for any $\epsilon>0$, there exists $0<\eta_{\epsilon}(x)\in L^{\infty}(\mathbb{R}^{N})$ such that for almost everywhere $x\in\mathbb{R}^{N}$, $G(x,t)\geq\eta_{\epsilon}(x)|t|^{\mu}$ if $|t|\geq \epsilon$. Then,  when $t\rightarrow\infty$, we have
\begin{align*}
I_{\nu}(tw)\leq&\frac{t^{r}\nu}{2}\int_{\mathbb{R}^{N}}(|\nabla w|^{r}+|w|^{r})dx
+\frac{t^{p}}{p}\int_{\mathbb{R}^{N}}a_{1}|\nabla w|^{p}dx
+\frac{t^{(\alpha+1)p}}{p}\int_{\mathbb{R}^{N}}a_{2}|w|^{\alpha p}|\nabla w|^{p}dx\\
&\qquad+t^{p}\int_{\mathbb{R}^{N}}V(x)|w|^{p}dx
-\frac{t^{\mu}}{\mu}\int_{\mathbb{R}^{N}}\eta_{\epsilon}(x)|w|^{\mu}dx\\
&\rightarrow-\infty.
\end{align*}
Thus, there exists $R_{j}=R(E_{j})>0$ large enough, such that for $u\in E_{j}/B_{R_{j}}$, $I_{\nu}(u)<0$ holds.\\
Seconde, it need to prove that $I_{\nu}(u)$ satisfies $(2)$ in the symmetric mountain pass theorem mentioned in Section 2.\\
Following from the condition $(g_{1})$ mentioned in Section 1, we know that for any $\epsilon>0$, there exists $C_{\epsilon}>0$ satisfies $$\int_{\mathbb{R}^{N}}G(x,u)dx\leq\epsilon\frac{\int_{\mathbb{R}^{N}}|u|^{p}dx}{p}+C_{\epsilon}
\frac{\int_{\mathbb{R}^{N}}|u|^{q}dx}{q}.$$
At this point, by the condition $(h_{0})$, there exists some $m>0$, such that
\begin{align*}
I_{\nu}(u)&=\frac{\nu}{2}\int_{\mathbb{R}^{N}}(|\nabla u|^{r}+|u|^{r})dx+\frac{1}{p}\int_{\mathbb{R}^{N}}A(x,u)|\nabla u|^{p}+V(x)|u|^{p}dx-\int_{\mathbb{R}^{N}}G(x,u)dx\\
&\geq\frac{m}{p}\int_{\mathbb{R}^{N}}|u|^{\alpha p}|\nabla u|^{p}dx+\frac{m}{p}\int_{\mathbb{R}^{N}}(|\nabla u|^{p}+V(x)|u|^{p})dx-C\frac{\int_{\mathbb{R}^{N}}|u|^{q}dx}{q}+o(1).
\end{align*}
Next, we denote
$$\mathbf{U}:=\{u\in X\,|\, u\neq0,\, m[\int_{\mathbb{R}^{N}}|u|^{\alpha p}|\nabla u|^{p}+(|\nabla u|^{p}+V(x)|u|^{p})dx]\geq C\int_{\mathbb{R}^{N}}|u|^{q}dx\}.$$
Then, for $u\in\partial\mathbf{U}\cap E^{\bot}_{j}$, according to the H\"{o}lder inequality and the continuous  embedding relationship from $W^{1,p}_{V}(\mathbb{R}^{N})$ to $L^{p}(\mathbb{R}^{N})$, we have
\begin{align*}
\int_{\mathbb{R}^{N}}|u|^{q}dx&\leq(\int_{\mathbb{R}^{N}}|u|^{p}dx)^{1-\theta}
(\int_{\mathbb{R}^{N}}|u|^{\frac{Np(1+\alpha)}{N-p}}dx)^{\theta}\\
&\leq C_{1}(\int_{\mathbb{R}^{N}}|\nabla u|^{p}+V(x)|u|^{p}dx)^{1-\theta}\cdot
(\int_{\mathbb{R}^{N}}|\nabla u|^{p}u^{\alpha p}dx)^{\frac{N\theta}{N-p}}\\
&\leq C_{2}(\int_{\mathbb{R}^{N}}|u|^{q}dx)^{1-\theta+\frac{N\theta}{N-p}},
\end{align*}
where $\theta=\frac{(q-p)(N-p)}{(N\alpha+p)p}$, $1-\theta+\frac{N\theta}{N-p}=\frac{N\alpha+q}{N\alpha+p}>1$. Thus, for $u\in\partial\mathbf{U}\cap E^{\bot}_{j}$, there exists $\tilde{C}>0$ such that
$$\int_{\mathbb{R}^{N}}|u|^{q}dx\geq\tilde{C}.$$
So, for $u\in\partial\mathbf{U}\cap E^{\bot}_{j}$, we have
$$I(u)\geq(\frac{q-p}{pq})C\int_{\mathbb{R}^{N}}|u|^{q}dx\geq C\tilde{C}=:\alpha. ~~(\alpha ~is ~ independent~ to~ \nu).$$
\textbf{Step 2:} firstly, we denote the minimax value
$$c_{j}(\nu):=\inf_{B\in\Gamma_{j}}\sup_{u\in B}I_{\nu}(u),$$
where
$$\Gamma_{j}:=\{B\,|\,B=h(\overline{D_{k}/Y}),\,h\in C(D_{k}, X) ~is ~odd, ~and ~h=id ~on~ \partial B_{R_{k}}\cap E_{k}\}.$$
Note that $D_{k}:=B_{R_{k}}\cap E_{k}$, $k\geq j$, $\gamma(Y)\leq k-j$. $\gamma(\cdot)$ denote as the genus of symmetric closed set, refer to \cite{R65CBMS}. And
$$Y\in\Sigma(X):=\{A/\{0\}\,|\, A\subset X,\,A=-A ~is~ closed ~set\}.$$
Following from the intersection theorem in \cite{R65CBMS}, we can obtain that $B\cap\partial\mathbf{U}\cap E_{j}^{\bot}\neq\emptyset$. Then we have $c_{j}(\nu)\geq\alpha$. Now, for fixed $j$, choosing $h=id$, $Y=\emptyset$. Set $B_{0}=id(D_{k})=\overline{B_{R_{k}}\cap E_{k}}\in\Gamma_{j}$. Following from the above analysis, we can deduce that there exists $\beta_{j}>0$(independent to $\nu$) satisfies
$$\alpha\leq c_{j}(\nu)\leq\sup_{u\in B_{0}}I_{\nu}(u)\leq\sup_{u\in B_{0}}I_{1}(u):=\beta_{j}.$$
That is, $c_{j}(\nu)\in [\alpha, \beta_{j}]$. Then, according to Lemma \ref{LEM3.4}, we know that the primitive functional $I(u)$ has a critical point $u_{j}$, which satisfies
$$I(u_{j})=c_{j}:=\lim_{v\rightarrow0}c_{j}(\nu).$$
\textbf{Step 3:} To prove $c_{j}\rightarrow+\infty$ as $j\rightarrow\infty$.\\
Following from the conditions mentioned in the Introduction, there exists $\epsilon_{0}>0$ such that
$G(x,t)\leq \epsilon_{0}|t|^{q}-V(x)|t|^{(1+\alpha) p}+C_{\epsilon_{0}}|t|^{q}$. Then we have
\begin{align*}
I_{\nu}(u)&\geq\frac{\nu}{2}(\int_{\mathbb{R}^{N}}|\nabla u|^{r}+|u|^{r}dx)+\frac{a_{1}}{p}\int_{\mathbb{R}^{N}}|\nabla u|^{p}dx+\frac{a_{2}}{p(1+\alpha)^{p}}\int_{\mathbb{R}^{N}}|\nabla |u|^{1+\alpha}|^{p}dx
\\&\qquad+\frac{1}{p}\int_{\mathbb{R}^{N}}V(x)|u|^{p}dx
-\epsilon_{0}\int_{\mathbb{R}^{N}}|u|^{p}dx+\int_{\mathbb{R}^{N}}V(x)|u|^{(1+\alpha)p}dx
-C_{\epsilon}\int_{\mathbb{R}^{N}}|u|^{q}dx\\
&\geq \frac{a}{p}\int_{\mathbb{R}^{N}}|\nabla |u|^{1+\alpha}|^{p}dx+\frac{a}{p}\int_{\mathbb{R}^{N}}V(x)|u|^{(1+\alpha)p}dx
-\frac{C(1+\alpha)}{q}\int_{\mathbb{R}^{N}}(|u|^{1+\alpha})^{\frac{q}{1+\alpha}}dx\\
&=\frac{a}{p}\int_{\mathbb{R}^{N}}|\nabla w|^{p}dx+\frac{a}{p}\int_{\mathbb{R}^{N}}V(x)|w|^{p}dx
-\frac{C(1+\alpha)}{q}\int_{\mathbb{R}^{N}}|w|^{\frac{q}{1+\alpha}}dx:=L(w),
\end{align*}
where $w=H(u):=|u|^{\alpha}u\in W^{1,p}(\mathbb{R}^{N})$, $\frac{q}{1+\alpha}\in(p,\frac{Np}{N-p})$. According to \cite{LT13NA}, we know that $L(w)$ has an unbounded sequence of critical values. Therefore,
$I(u_{j})=c_{j}\rightarrow+\infty$ as $j\rightarrow+\infty$. $\blacksquare$
\begin{remark}\label{REk3.6}\rm{ When the assumptions $(V_{2})$, $(h_{0})$ in the Introduction are replaced by the following $(V_{3})$ and $(h_{1})$, the above conclusion still holds. The reason is that under condition $(V_{3})$, $W^{1,p}_{V}(\mathbb{R}^{N})$ is compactly embedded into $L^{s}(\mathbb{R}^{N})$, $s\in(p,p^{*})$. Moreover, under this condition, the compactness of the Palais-Smale sequence holds, which can refer \cite{LT13NA, LZ11JMAA}.
\begin{description}
  \item[$(V_{3})$.] There exists $r>0$, such that for any $b>0$,
  $$\lim_{|y|\rightarrow\infty}meas(\{x\in\mathbb{R}^{N},\,V(x)\leq b\}\cap B_{r}(y))=0.$$
  \item[$(h_{1})$.] $A(x,t)=a_{1}(x)+a_{2}(x)|t|^{\alpha p}$, \,$\alpha p>1$. where $a_{1}(x)$, $a_{2}(x)\in L^{\infty}(\mathbb{R}^{N})$ and are positive.
\end{description}
}
\end{remark}
\section{DATA AVAILABILITY STATEMENT}
The data supporting this study's findings are available from the corresponding author upon reasonable request.
\section{CONFLICT OF INTEREST STATEMENT}
The authors declare no conflicts of interest.
\section{FUNDING DECLARATION}
This work was supported by the Young Project of Henan Natural Science Foundation(Grant No.252300420928)





\vspace{-0.11cm}

{
\small

}

\end{document}